\newtheorem{remark}{\indent Remark}
\newtheorem{algorithm}{Algorithm}[section]
\newcommand*{\defeq}
{\mathrel{\vcenter{\baselineskip0.5ex \lineskiplimit0pt
                     \hbox{\scriptsize.}\hbox{\scriptsize.}}}
                     =}
\DeclareMathOperator*{\argmax}{arg\,max}
\begin{document}

\title{A time-accurate, adaptive discretization for fluid flow problems}
\author{Victor DeCaria\thanks{%
Department of Mathematics, University of Pittsburgh, Pittsburgh, PA 15260,
USA. (\texttt{vpd7@pitt.edu}). This author was supported by NSF grants DMS
1522267, 1817542 and CBET 160910.} \and William Layton\thanks{%
Department of Mathematics, University of Pittsburgh, Pittsburgh, PA 15260,
USA. (\texttt{wjl@pitt.edu, www.math.pitt.edu\symbol{126}wjl}). This author
was supported by NSF grants DMS 1522267, 1817542, and CBET 160910.} \and %
Haiyun Zhao\thanks{%
Department of Mathematics, University of Pittsburgh, Pittsburgh, PA 15260,
USA. (\texttt{haz50@pitt.edu}). This author was supported by NSF grants DMS
1522267, 1817542 and CBET 160910.} }
\maketitle

\begin{abstract}This report presents a low computational and cognitive complexity, stable,
time accurate and adaptive method for the Navier-Stokes equations. The
improved method requires a minimally intrusive modification to an existing
program based on the fully implicit / backward Euler time discretization,
does not add to the computational complexity, and is conceptually simple.
The backward Euler approximation is simply post-processed with a two-step,
linear time filter. The time filter additionally removes the overdamping of
Backward Euler while remaining unconditionally energy stable, proven herein.
Even for constant stepsizes, the method does not reduce to a standard /
named time stepping method but is related to a known 2-parameter family
of A-stable, two step, second order methods. Numerical tests confirm the
predicted convergence rates and the improved predictions of flow quantities
such as drag and lift.
\end{abstract}
\begin{keywords}
Navier-Stokes, Backward Euler, time filter, time discretization, finite element method.
\end{keywords}

\maketitle

\section{Introduction}

The backward Euler 
time discretization is often used for complex, viscous flows due to its
stability, rapid convergence to steady state solutions and simplicity to
implement. However, it has poor time transient flow accuracy, \cite{gresho},
and can fail by overdamping a solution's dynamic behavior. For ODEs, adding
a time filter to backward Euler, as in (1.3) below, yields two, embedded,
A-stable approximations of first and second order accuracy, \cite{guzel}.
This report develops this idea into an adaptive time-step and adaptive order
method for time accurate fluid flow simulation and gives an analysis of the
resulting methods properties for constant time-steps. For constant
time-steps, the resulting Algorithm 1.1 below involves adding only 1 extra
line to a backward Euler code. The added filter step increases accuracy and
adds negligible additional computational complexity, see Figure \ref%
{fig:timing} and Figure \ref{fig:mesh}. Further, both time adaptivity and
order adaptivity, presented in Section 2 and tested in Section 6, are easily
implemented in a constant time step backward Euler code with $\mathcal{O}%
(20) $ added lines. Thus, algorithms herein have two main features. First,
they can be implemented in a legacy code based on backward Euler without
modifying the legacy components. Second, both time step and method order can
easily be adapted due to the embedded structure of the method. The variable
step, variable order step (VSVO) method is presented in Section 2 and tested
in Section \ref{sec:test_adaptive}.

Even for constant time-steps and constant order, the method herein does not
reduce to a standard / named method. Algorithm \ref{alg:themethod} with Option B is (for constant order and
time-step) equivalent to a member of the known, 2 parameter family of second
order, 2-step, A-stable one leg methods (OLMs), see Algorithm \ref%
{alg:equiv_discrete}, Section \ref{section2}. Stability and velocity
convergence of the (constant time step) general second order, two-step,
A-stable method for the Navier-Stokes equations was proven already in 
\cite{girault}, see equation (3.20) p. 185, and has been elaborated thereafter,
e.g., \cite{JIANG2016388}. Our \textit{velocit}y stability and error analysis,
while necessary for completeness, parallels this previous work and is thus
collected in Appendix \ref{app:velocity}. On the other hand, Algorithm \ref{alg:themethod} with Option A does \emph{not} fit within a
general theory even for constant stepsize, and produces more accurate pressure approximations.

We begin by presenting the simplest, constant stepsize case to fix ideas.
\ Consider the time dependent incompressible Navier-Stokes (NS) equations: 
\begin{equation}
\begin{aligned} & u_{t}+ u\cdot\nabla u-\nu\Delta u+\nabla p = f,\;
\mathrm{and} \; \nabla\cdot u=0\; \mathrm{in} \;\Omega,\\ & u=0\;
\mathrm{on} \;\partial\Omega,\; \mathrm{and} \; \int_{\Omega} p\,d x = 0,\\
& u( x,0)= u_{0}( x)\; \mathrm{in} \;\Omega.\\ \end{aligned}  \label{NSE}
\end{equation}%
Here, $\Omega \subset \mathbb{R}^{d}$($d$=2,3) is a bounded polyhedral
domain; $u:\Omega \times \lbrack 0,T]\rightarrow \mathbb{R}^{d}$ is the
fluid velocity; $p:\Omega \times (0,T]\rightarrow \mathbb{R}$ is the fluid
pressure. The body force $f(x,t)$ is known, and $\nu $ is the kinematic
viscosity of the fluid.

Suppressing the spacial discretization, the method calculates an
intermediate velocity $\hat{u}^{n+1}$ using the
backward Euler / fully implicit method. Time filters (requiring only two
additional lines of code and not affecting the BE calculation) are applied
to produce $u^{n+1}$ and $p^{n+1}$ follows:

\begin{algorithm}[Constant $\triangle t$ BE plus time filter]
\label{alg:themethod} With $u^*=\hat{u}^{n+1}$ (Implicit) or $u^*=2u^n-u^{n-1}$ (Linearly-Implicit),
Step 1: (Backward Euler) 
\begin{equation}
\begin{aligned} &\frac{ \hat{u}^{n+1}- u^{n}}{\Delta t} +u^*\cdot
\nabla \hat{u}^{n+1} -\nu\Delta \hat{u}^{n+1} +\nabla \hat{p}^{n+1} =
f(t^{n+1}),\\ &\nabla\cdot \hat{u}^{n+1}=0, \\ \end{aligned}  \label{stp1}
\end{equation}
Step 2: (Time Filter for velocity and pressure) 

\begin{equation}  \label{stp2}
u^{n+1} = \hat{u}^{n+1}-\frac{1}{3} ( \hat{u}^{n+1}-2
u^{n}+ u^{n-1})
\end{equation}
Option A: (No pressure filter)
\begin{equation} \notag
p^{n+1} = \hat{p}^{n+1}.
\end{equation}
Option B:
\begin{equation} \notag
p^{n+1} = \hat{p}^{n+1}-\frac{1}{3} ( \hat{p}^{n+1}-2
p^{n}+ p^{n-1})
\end{equation}
Algorithm \ref{alg:themethod}A means Option A is used, and Algorithm \ref{alg:themethod}B means Option B is used.
\end{algorithm}

\begin{figure}[tbp]
\centering
\begin{subfigure}{0.49\linewidth}
   \centering
   \includegraphics[width = 1\linewidth]{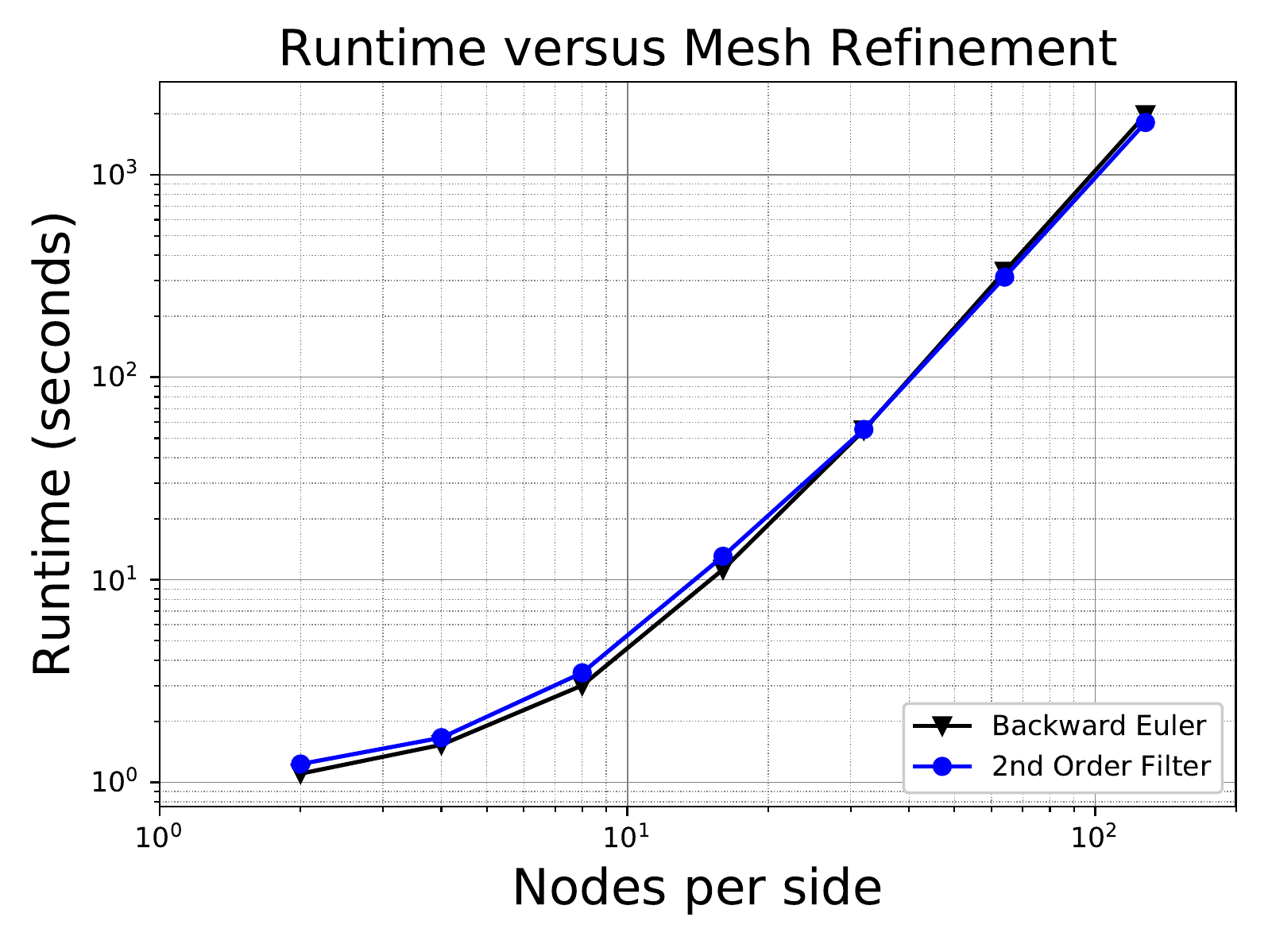}
   \caption{\label{fig:timing}}
\end{subfigure}
\begin{subfigure}{0.49\linewidth}
   \centering
   \includegraphics[width = 1\linewidth]{{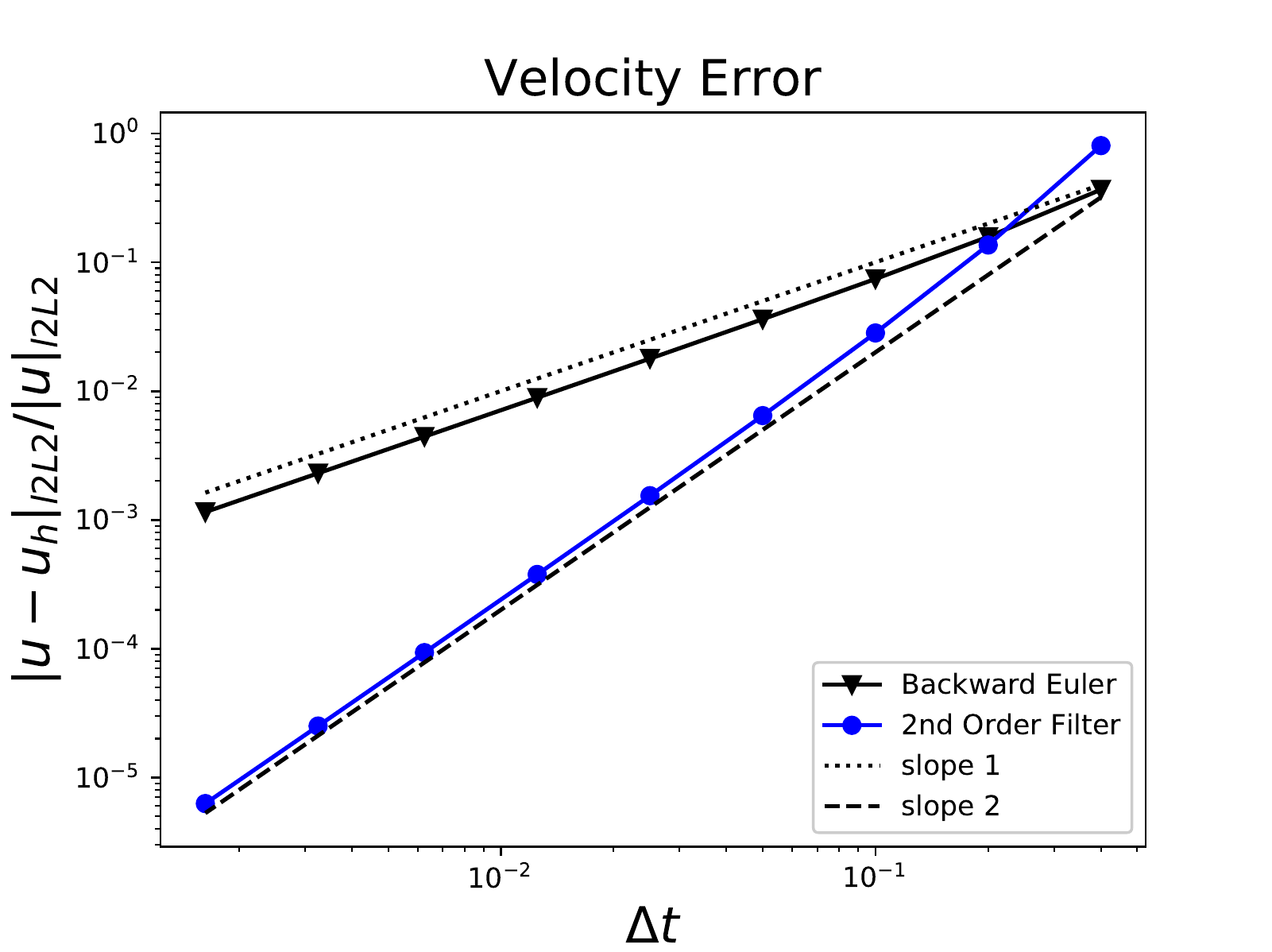}}
   \caption{\label{fig:mesh}}
\end{subfigure}
\caption{The time filter does not add to the computational complexity (Fig. 
\protect\ref{fig:timing}), yet increases the method to second order (Fig. 
\protect\ref{fig:mesh}). }
\label{fig:runtime}
\end{figure}

Its implementation in a backward Euler code does not require additional
function evaluations or solves, only a minor increase in floating point
operations. Figure \ref{fig:timing} presents a runtime comparison with and
without the filter step. It is apparent that the added computational
complexity of Step 2 is negligible. However, adding the time filter step has
a profound impact on solution quality, see Figure \ref{fig:mesh}.

Herein, we give a velocity stability and error analysis for constant
timestep in Appendix \ref{app:velocity}. Since (eliminating the intermediate step) the
constant time-step method is equivalent to an A-stable, second order, two
step method, its velocity analysis has only minor deviations from the
analysis in \cite{girault} and \cite{JIANG2016388}. We also give an analysis of the unfiltered pressure error, which does not have a
parallel in \cite{girault} or \cite{JIANG2016388}. The predicted (optimal)
convergence rates are confirmed in numerical tests in Section \ref{sec:tests}. We prove
the pressure approximation is stable and second order accurate provided only
the velocity is filtered. The predicted second order pressure convergence,
with or without filtering the pressure, is also confirmed in our tests,
Figure \ref{fig:convergence}.

The rest of the paper is organized as follow. In Section 2, we give the
full, self-adaptive VSVO algorithm for a general initial value problem.
Section 3 introduces some important mathematical notations and preliminaries
necessary and analyze the method for the Navier-Stokes equations. In Section
4, we prove unconditional, nonlinear energy stability in Theorem \ref{energy
stability}. We analyze consistency error in Section \ref{sec:consistency}. In \ref{sec:velocity_error}, we
prove $\mathcal{O}(\Delta t^{2})$ convergence for velocity, Theorem \ref%
{convergency}. The proof of the stability of the pressure is in Theorem \ref%
{thm_pressure_stability} in Section \ref{sec:pressure_stability}. We prove
second order accuracy for pressure in Section \ref{sec:pressure_error}.
Numerical tests are given in Section \ref{sec:tests} to validate the theoretical
predictions.

\subsection{Related work}

Time filters are primarily used to stabilize leapfrog time discretizations
of weather models; see \cite{robert}, \cite{asselin}, \cite{Williams2009}.
In \cite{guzel} it was shown that the time filter used herein increases accuracy to second order, preserves A-stability,
anti-diffuses the backward Euler approximation and yields an error estimator
useful for time adaptivity. The analysis in \cite{guzel} is an application
of classical numerical ODE theory and does not extend to the Navier-Stokes
equations. For the constant time step case, our analysis is based on
eliminating the intermediate approximation $\hat{u}^{n+1}$\ and reducing the
method to an equivalent two step, OLM (a twin of a linear multistep method). The velocity stability and convergence of the general A-stable
OLM was analyzed for the NSE (semi-implicit, constant time step and without
space discretization) in \cite{girault}. Thus, the constant time step, discrete 
\textit{velocity} results herein follow from these results. There is
considerable previous work on analysis of multistep time discretizations of
various PDEs, e.g. Crouzeix and Raviart \cite{crouzeix1976}. Baker,
Dougalis, and Karakashian \cite{bakerBdfthree82} gave a long-time error
analysis of the BDF methods for the NSE under a small data condition. (We
stress that the method herein is \textit{not a BDF method}.) The analysis of
the method in Girault and Raviart \cite{girault} was extended to include
spacial discretizations in \cite{JIANG2016388}. The work in \cite{JIANG2016388} also
shows how to choose those parameters to improve accuracy in higher Reynolds
number flows - a significant contribution by itself. Other interesting
extensions include the work of Gevici \cite{geveci}, Emmrich \cite%
{Emmrich2004}, \cite{Emmrich2004-2}, Jiang \cite{jiang2017}, Ravindran \cite%
{ravindran2016} and \cite{layton2014}.

\section{The Adaptive VSVO Method}

Section \ref{sec:test_adaptive} tests both the constant time step method and the method with
adaptive step and adaptive order. This section will present the algorithmic
details of adapting both the order and time step based on estimates of local
truncation errors based on established methods \cite{GH10}. The constant time step Algorithm 1.1 involves adding one
(Option A) or two (Option B) lines to
a backward Euler FEM code. The full self adaptive VSVO Algorithm 2.1 below adds $\mathcal{O%
}(20)$ lines. We first give the method for the initial value problem%
\begin{equation*}
y^{\,\,\prime }(t)=f(t,y(t)),\text{ for }t>0\text{ and }y(0)=y_{0}.
\end{equation*}
Denote the $n^{th}$ time step size by $\Delta t_n$. Let $t^{n+1}=t^{n}+\Delta t_n$ and $y^{n}$ an approximation to $y(t_{n})$. The choice of filtering weights
depend on $\omega_n \defeq\Delta t_n/\Delta t_{n-1}$, Step 2 below. $TOL$ is the user supplied tolerance on the allowable error per step.%
\begin{algorithm}[Variable Stepsize, Variable Order 1 and 2 (VSVO-12)\label{alg:vsvo_themethod}]\hfill

\noindent
\textbf{Step 1 : Backward Euler}
\begin{equation}\notag
\frac{y_{(1)}^{n+1}-y^{n}}{\Delta t_{n}}=f(t_{n+1},y_{(1)}^{n+1})
\end{equation}

\noindent
\textbf{Step 2 : Time Filter}
\begin{equation}\notag
y_{(2)}^{n+1}=y_{(1)}^{n+1}-\frac{\omega_{n+1}}{2\omega_{n+1}+1}\left(y_{(1)}^{n+1} - (1+\omega_{n+1})y^{n} + \omega_{n+1}y^{n-1}) \right)
\end{equation}

\noindent
\textbf{Step 3 : Estimate error in $y_{(1)}^{n+1}$ and $y_{(2)}^{n+1}$.}
\begin{equation}\notag
EST_1=y_{(2)}^{n+1}-y_{(1)}^{n+1}
\end{equation} 
\begin{gather*}
EST_2=\frac{\omega_{n} \omega_{n+1} (1+\omega_{n+1})}{1+2 \omega_{n+1}+\omega_{n} \left(1+4 \omega_{n+1}+3 \omega_{n+1}^2\right)}\bigg(y_{(2)}^{n+1}
\\
-\frac{(1+\omega_{n+1}) (1+\omega_{n} (1+\omega_{n+1}))}{1+\omega_{n}}y^{n} +\omega_{n+1} (1+\omega_{n} (1+\omega_{n+1}))y^{n-1}
 \\
 -\frac{\omega_{n}^2 \omega_{n+1} (1+\omega_{n+1})}{1+\omega_{n}}y^{n-2}\bigg).
\end{gather*}

\noindent
\textbf{Step 4 : Check if tolerance is satisfied.}

If $\|EST_1\|<TOL$ or $\|EST_2\|<TOL$, at least one approximation is acceptable. Go to Step 5a. Otherwise, the step is rejected. Go to Step 5b. 

\noindent
\textbf{Step 5a : At least one approximation is accepted. Pick an order and stepsize to proceed.}

If both approximations are acceptable, set 
\begin{equation}\notag
\Delta t_{(1)} = 0.9\Delta t_n \left(\frac{TOL}{\|EST_1\|}\right)^{\frac{1}{2}}, \hspace{15mm}\Delta t_{(2)}= 0.9\Delta t^n \left(\frac{TOL}{\|EST_2\|}\right)^{\frac{1}{3}}.
\end{equation}

Set $$i=\argmax_{i\in \{1,2\}} \Delta t^{(i)}, \hspace{10mm}\Delta t^{n+1}=\Delta t^{(i)}, \hspace{10mm} t^{n+2}=t^{n+1}+\Delta t_{n+1}, \hspace{ 10mm} y^{n+1} = y_{(i)}^{n+1} .$$

If only $y^{(1)}$ (resp. $y^{(2)}$) satisfies $TOL$, set $\Delta t_{n+1}=\Delta t^{(1)}$ (resp. $\Delta t^{(2)}$), and $y^{n+1} = y_{(1)}^{n+1}$ (resp. $y_{(2)}^{n+1}$). Proceed to Step 1 to calculate $y^{n+2}$.

\noindent
\textbf{Step 5b : Neither approximations satisfy TOL.}

Set 
\begin{equation}\notag
\Delta t^{(1)} = 0.7\Delta t_n \left(\frac{TOL}{\|EST_1\|}\right)^{\frac{1}{2}}, \hspace{15mm}\Delta t^{(2)}= 0.7\Delta t_n \left(\frac{TOL}{\|EST_2\|}\right)^{\frac{1}{3}}.
\end{equation}

Set $$i=\argmax_{i\in \{1,2\}} \Delta t^{(i)}, \hspace{10mm}\Delta t_{n}=\Delta t^{(i)}, \hspace{10mm} t^{n+1}=t^{n}+\Delta t_{n}$$

Return to Step 1 to try again.

\end{algorithm}
For clarity, we have not mentioned several standard features such as setting a maximum and minimum timestep, the maximum or minimum stepsize ratio, etc.

The implementation above computes an estimation of the local
errors in Step 3. $EST_1$ provides an estimation for the local error of the first order approximation $y^{(1)}_{n+1}$ since $y^{(2)}_{n+1}$ is a second order approximation. For a justification of $EST_2$, see \ref{sec:num_ode}. The optimal next stepsizes for both approximations are predicted in a standard
way in Steps 5a and 5b.  The method order (first or second) is adapted by
accepting whichever approximation satisfies the error tolerance criterion (Step 4)
and yields the larger next time step by the choice of $i=\argmax \Delta t^{(i)}$.

 Standard formulas, see e.g. \cite{griffiths}, are used to pick the next stepsize. The numbers 0.9 in Step 5a and 0.7 in Step 5b are commonly used safety factors to make the next approximation more likely to be accepted.

\textbf{One more line is needed for linearly implicit methods.} For linearly
implicit methods the point of linearization must also have $\mathcal{O}%
(\Delta t^{2})$ accuracy. For example, with $u^{\ast }=u^{n}$ 
\begin{equation}
\frac{u^{n+1}-u^{n}}{\Delta t_n}+u^{\ast }\cdot \nabla u^{n+1}+{\frac{1}{2}}(\nabla
\cdot u^{\ast })u^{n+1}+\nabla p^{n+1}-\nu \Delta u^{n+1}=f^{n+1}\text{ \& }%
\nabla \cdot u^{n+1}=0  \label{eq:standardBEforNSE}
\end{equation}%
is a common first order linearly implicit method. The required modification in the BE
step to ensure second order accuracy after the filter is to shift the point
of linearization from $u^{\ast }=u^{n}$ to%
\begin{equation}
u^{\ast }=\left( 1+\frac{\Delta t_{n+1}}{\Delta t_{n}}\right) u^{n}-\frac{\Delta t_{n+1}}{\Delta t_{n}} u^{n-1} = 
\left( 1+\omega_n\right) u^{n}-\omega_n u^{n-1}.  \notag
\end{equation}

\textbf{Other simplifications.} The algorithm can be simplified if only the
time-step is adapted (not order adaptive). It can be further simplified using \emph{extrapolation} where the second order approximation is adapted based on $EST_1$
(pessimistic for the second order approximation). 

\section{Notations and preliminaries}

\label{section2} We introduce some notations and inequalities which will be
used in later sections. $(\cdot ,\cdot ),\Vert \cdot \Vert $ denotes the $%
L^{2}(\Omega )$ inner product and norm. $C$ will denote a generic, finite
constant depending possibly on $T$, $\Omega $ and $f$. The velocity space $X$
and pressure space $Q$ are defined 
\begin{equation*}
\begin{aligned} & X:=H^{1}_{0}(\Omega)^{d}=\{v\in
H^{1}(\Omega)^{d}:v|_{\partial\Omega}=0\},\\ &
Q:=L^{2}_{0}(\Omega)^{d}=\{q\in L^{2}(\Omega):\int_{\Omega}q=0\}.
\end{aligned}
\end{equation*}%
The divergence free space $V$ is given by 
\begin{equation*}
\begin{aligned} & V:=\{v\in X:(\nabla\cdot v,q)=0\quad\forall q\in Q)\}.
\end{aligned}
\end{equation*}%
For measurable $v:[0,T]\rightarrow X$, define for, respectively, $1\leq
p<\infty $ and $p=\infty $ 
\begin{equation*}
||v||_{L^{p}(0,T;X)}=\left( \int_{0}^{T}||v(t)||_{X}^{p}dt\right) ^{1/p}%
\text{ and }||v||_{L^{\infty }(0,T;X)}=ess\sup_{0\leq t\leq
T}||v(t)||_{X}^{p}.
\end{equation*}
We define the skew-symmetrized nonlinear form: 
\begin{equation*}
\begin{aligned} & B( u, v):= u\cdot\nabla v+\frac{1}{2}(\nabla\cdot u)
v,\quad \forall\; u, v, w \in X,\\ &b( u, v, w):=(B( u, v), w).\\
\end{aligned}
\end{equation*}

\begin{lemma}\label{nonlinear_inequality}
There exists $C >0$ such that 
\begin{equation}\notag
\begin{aligned}
&b( u, v, w)\leq C\|\nabla u\|\|\nabla v\|\|\nabla w\|, \quad\quad \forall\; u, v, w\in X \\
&b( u, v, w)\leq C\| u\|\| v\|_{2}\|\nabla w\|\quad \forall u, w\in X, v\in X\cap H^{2}(\Omega).
\end{aligned}
\end{equation}
\end{lemma}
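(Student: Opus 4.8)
The plan is to expand $b(u,v,w)=\tfrac12((\nabla\cdot u)v,w)+(u\cdot\nabla v,w)$ and to bound each of the two pieces by the generalized H\"older inequality together with the Sobolev embeddings $H^1(\Omega)\hookrightarrow L^4(\Omega)$, $H^1(\Omega)\hookrightarrow L^6(\Omega)$ and $H^2(\Omega)\hookrightarrow L^\infty(\Omega)$ (all valid for $d\le 3$), and the Poincar\'e--Friedrichs inequality $\|z\|\le C\|\nabla z\|$ on $X=H^1_0(\Omega)^d$, which lets us replace full $H^1$ norms of functions in $X$ by $\|\nabla\cdot\|$.

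For the first inequality I would estimate the convective term by H\"older with exponents $(4,2,4)$, writing $(u\cdot\nabla v,w)\le\|u\|_{L^4}\|\nabla v\|\,\|w\|_{L^4}$, then bounding $\|u\|_{L^4}\le C\|\nabla u\|$ and $\|w\|_{L^4}\le C\|\nabla w\|$ through the $L^4$ embedding and Poincar\'e. The divergence term is handled the same way with exponents $(2,4,4)$, using $|\nabla\cdot u|\le C|\nabla u|$ pointwise, giving $((\nabla\cdot u)v,w)\le C\|\nabla u\|\|\nabla v\|\|\nabla w\|$. Summing produces the stated bound.

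For the second inequality, where $u$ must enter only through $\|u\|$, the convective term is again direct: H\"older with exponents $(2,3,6)$ gives $(u\cdot\nabla v,w)\le\|u\|\,\|\nabla v\|_{L^3}\|w\|_{L^6}$, after which $\|\nabla v\|_{L^3}\le C\|\nabla v\|_{H^1}\le C\|v\|_2$ and $\|w\|_{L^6}\le C\|\nabla w\|$. The subtle step, and the one I expect to be the main obstacle, is the divergence term $\tfrac12((\nabla\cdot u)v,w)$: the derivative sits on $u$, yet the target allows only $\|u\|$ with no derivative. The remedy is to integrate by parts, which introduces no boundary contribution since $u\in H^1_0(\Omega)^d$:
\[
((\nabla\cdot u)v,w)=-\int_\Omega u\cdot\nabla(v\cdot w)\,dx,\qquad |\nabla(v\cdot w)|\le |w|\,|\nabla v|+|v|\,|\nabla w|.
\]
The contribution of $|w|\,|\nabla v|$ is then bounded exactly as the convective term (exponents $(2,3,6)$), while that of $|v|\,|\nabla w|$ is controlled by H\"older with exponents $(2,2,\infty)$, namely $\int_\Omega |u|\,|\nabla w|\,|v|\le\|u\|\,\|\nabla w\|\,\|v\|_{L^\infty}\le C\|u\|\,\|\nabla w\|\,\|v\|_2$, the key ingredient being $H^2(\Omega)\hookrightarrow L^\infty(\Omega)$ for $d\le 3$. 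Collecting the pieces gives $b(u,v,w)\le C\|u\|\|v\|_2\|\nabla w\|$. Throughout, the only points requiring care are verifying that each H\"older triple of exponents sums to $1$ and that every embedding holds in both $d=2$ and $d=3$; the $d=3$ case is the binding one and dictates the exponent choices above.
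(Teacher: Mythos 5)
Your proof is correct. There is nothing in the paper to parallel it against: the paper's ``proof'' of this lemma is a one-line citation to Lemma 2.1 of Temam, so your self-contained argument necessarily takes a different (and more informative) route, and it is in fact the standard argument underlying that reference. All of your H\"older triples sum to one, and every embedding you invoke ($H^1\hookrightarrow L^4$, $H^1\hookrightarrow L^6$, hence $H^1\hookrightarrow L^3$ on a bounded domain, and $H^2\hookrightarrow L^\infty$) is valid for $d=2,3$, with Poincar\'e available because the relevant arguments lie in $X=H^1_0(\Omega)^d$. You also correctly isolated the only nontrivial point: in the second bound the derivative sits on $u$ while the right-hand side allows only $\|u\|$, and your integration by parts $((\nabla\cdot u)v,w)=-\int_\Omega u\cdot\nabla(v\cdot w)\,dx$, with the boundary term vanishing since $u\in H^1_0(\Omega)^d$, is exactly the right remedy; it is equivalent to invoking the standard identity $b(u,v,w)=\tfrac12(u\cdot\nabla v,w)-\tfrac12(u\cdot\nabla w,v)$ for the skew-symmetrized form and estimating the two pieces with your $(2,3,6)$ and $(2,\infty,2)$ triples. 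One small point worth recording for completeness: the integration by parts should be justified by approximating $u$ by smooth compactly supported fields, which is legitimate here because $v\cdot w\in H^1(\Omega)$ under your hypotheses (indeed $|w|\,|\nabla v|\in L^2$ via $L^6\times L^3$ and $|v|\,|\nabla w|\in L^2$ via $L^\infty\times L^2$). With that remark added, the argument is complete and could stand in place of the citation.
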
%
\begin{proof}
See Lemma 2.1 on p. 12 of \cite{temamNonlinear}.
\end{proof}

We use the following discrete Gronwall inequality found in \cite[Lemma 5.1]%
{heywood1990}. 
\begin{lemma}[Discrete Gronwall Inequality]\label{discrete_gronwall}
Let $\Delta t$, $H$, $a_n,b_n,c_n,d_n$ (for integers $n\geq0$) be non-negative numbers such that
\begin{equation}
a_l +\Delta t \sum_{n=0}^l b_n \leq \Delta t \sum_{n=0}^l d_na_n+\Delta t \sum_{n=0}^l c_n + H, \quad \forall\; l \geq 0
\end{equation}
Suppose $\Delta td_n < 1$ $ \forall n$, then,
\begin{equation}
a_l + \Delta t \sum^l_{n=0}b_n \leq \exp \Big( \Delta t \sum^l_{n=0}b_n \frac{d_n}{1-\Delta t d_n} \Big) \Big(\Delta t \sum^l_{n=0} c_n + H \Big), \quad \forall\; l \geq 0
\end{equation}
\end{lemma}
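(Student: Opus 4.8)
The plan is to prove the inequality by a discrete induction on $l$, treating the full quantity $E_l \defeq a_l + \Delta t \sum_{n=0}^l b_n$ as a single object to be bounded and abbreviating the data term by $G_l \defeq \Delta t \sum_{n=0}^l c_n + H$, which is nondecreasing in $l$ since $c_n, H \geq 0$. The first step is to peel off the top-index term from the sum $\Delta t \sum_{n=0}^l d_n a_n$ appearing on the right of the hypothesis: moving $\Delta t d_l a_l$ to the left gives $(1-\Delta t d_l)a_l + \Delta t \sum_{n=0}^l b_n \leq \Delta t \sum_{n=0}^{l-1} d_n a_n + G_l$. Because $d_l \geq 0$ and the hypothesis $\Delta t d_n < 1$ guarantee $0 < 1 - \Delta t d_l \leq 1$, I can shrink the coefficient of $\Delta t \sum b_n$ from $1$ to $1-\Delta t d_l$ and factor it out, using $(1-\Delta t d_l)\Delta t\sum b_n \leq \Delta t\sum b_n$, to obtain $(1-\Delta t d_l)E_l \leq \Delta t \sum_{n=0}^{l-1} d_n a_n + G_l$.

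Next, using $a_n \leq E_n$ and dividing by the positive factor $1-\Delta t d_l$, I arrive at the clean recursion $E_l \leq \frac{\Delta t}{1-\Delta t d_l}\sum_{n=0}^{l-1} d_n E_n + \frac{G_l}{1-\Delta t d_l}$. I would then establish by induction the product bound $E_l \leq G_l \prod_{n=0}^{l}\frac{1}{1-\Delta t d_n}$. Setting $P_m \defeq \prod_{k=0}^{m-1}\frac{1}{1-\Delta t d_k}$ with $P_0 = 1$, the key algebraic identity is the telescoping relation $\Delta t d_n P_{n+1} = P_{n+1} - P_n$, which is immediate from $(1-\Delta t d_n)P_{n+1} = P_n$. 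Feeding the induction hypothesis $E_n \leq G_n P_{n+1}$ together with the monotonicity $G_n \leq G_l$ into the recursion collapses the sum via the telescope to $\Delta t \sum_{n=0}^{l-1} d_n E_n \leq G_l(P_l - 1)$, and one line of algebra then advances the claim to $E_l \leq G_l P_{l+1}$, closing the induction.

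Finally, I would convert the product into the stated exponential through the elementary inequality $1 + y \leq e^{y}$ applied with $y = \frac{\Delta t d_n}{1-\Delta t d_n} \geq 0$, which yields $\frac{1}{1-\Delta t d_n} = 1 + \frac{\Delta t d_n}{1-\Delta t d_n} \leq \exp\big(\frac{\Delta t d_n}{1-\Delta t d_n}\big)$; multiplying over $n$ turns $\prod_{n=0}^{l}\frac{1}{1-\Delta t d_n}$ into $\exp\big(\Delta t \sum_{n=0}^{l}\frac{d_n}{1-\Delta t d_n}\big)$, exactly the amplification factor in the conclusion (with the exponent read as $\frac{d_n}{1-\Delta t d_n}$). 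The step that requires the most care, and the main obstacle, is keeping the \emph{entire} left-hand side $a_l + \Delta t\sum b_n$ intact throughout: the absorption step naturally invites one to discard the nonnegative $\Delta t\sum b_n$, but the conclusion bounds the sum of both terms, so $E_l$ must be carried as a unit and the factor $1-\Delta t d_l$ absorbed into $\Delta t\sum b_n$ without leaving a spurious coefficient. Beyond this, the only delicate points are the telescoping identity for $P_m$ and the off-by-one bookkeeping in the index range of the final sum; the remaining manipulations are routine.
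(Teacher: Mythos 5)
Your proof is correct and complete. There is nothing in the paper to compare it against: the paper does not prove this lemma but quotes it from Lemma 5.1 of \cite{heywood1990}, and your argument is in essence the standard proof from that reference. Every step checks out: the absorption of the implicit term $\Delta t\, d_l a_l$ is legitimate because $0 < 1-\Delta t\, d_l \le 1$ lets you write $(1-\Delta t\, d_l)\,\Delta t \sum_{n=0}^{l} b_n \le \Delta t \sum_{n=0}^{l} b_n$, so the full quantity $E_l = a_l + \Delta t\sum_{n=0}^{l} b_n$ is carried as a unit rather than discarding the $b$-sum; the induction claim $E_l \le G_l P_{l+1}$ has a sound base case ($l=0$ gives $E_0 \le G_0 P_1$ with an empty sum), and the telescoping identity $\Delta t\, d_n P_{n+1} = P_{n+1}-P_n$ combined with the monotonicity $G_n \le G_l$ collapses $\Delta t\sum_{n=0}^{l-1} d_n E_n$ to $G_l(P_l-1)$, after which division by $1-\Delta t\, d_l$ closes the step; and the final conversion $\frac{1}{1-\Delta t\, d_n} = 1 + \frac{\Delta t\, d_n}{1-\Delta t\, d_n} \le \exp\bigl(\frac{\Delta t\, d_n}{1-\Delta t\, d_n}\bigr)$ is exactly right. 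You were also correct to read the exponent as $\Delta t \sum_{n=0}^{l} \frac{d_n}{1-\Delta t\, d_n}$: the factor $b_n$ appearing inside the exponential in the paper's statement is a typographical slip, and the form your proof delivers is the one in \cite{heywood1990} and the one actually used in the paper's Gronwall applications.
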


Multiplying \eqref{NSE} by test functions $(v,q)\in (X,Q)$ and integrating
by parts gives 
\begin{equation}
(u_{t},v)+b(u,u,v)+\nu (\nabla u,\nabla v)-(p,\nabla \cdot v)+(\nabla \cdot
u,q)=(f,v), \hspace{5mm}(\nabla \cdot u,q)=0.  \label{eqn:nse_var_form}
\end{equation}%
To discretize the above system in space, we choose conforming finite element
spaces for velocity $X^{h}\subset X$ and pressure $Q^{h}\subset Q$
satisfying the discrete inf-sup condition and the following approximation
properties: 
\begin{equation}
\begin{aligned} &\inf_{q_h \in Q_h} \sup_{v_h \in
X_h}\dfrac{(q_h,\nabla\cdot v_h)}{\|q_h\|\|\nabla v_h\|} \geq \beta > 0, \\
&\inf_{v \in X^h} \| u-v\| \leq Ch^{k+1}\|u\|_{k+1}, \quad u \in
H^{k+1}(\Omega)^d \\ &\inf_{v \in X^h} \| u-v\|_1 \leq Ch^{k+1}\|u\|_{k},
\quad u \in H^{k+1}(\Omega)^d \\ &\inf_{r \in Q^h} \| p-r\| \leq
Ch^{s+1}\|p\|_{s+1}, \quad p \in H^{s+1}(\Omega) \\ \end{aligned}
\end{equation}%
$h$ denotes the maximum triangle diameter.
Examples of finite element spaces satisfying these conditions are the MINI \cite{Arnold1984} and Taylor-Hood \cite{M2AN_1984__18_2_175_0} elements. The discretely divergence free
subspace $V_{h}\in X_{h}$ is defined 
\begin{equation*}
\begin{aligned} & V_{h}:=\{ v_{h}\in X_{h}:(\nabla\cdot
v_{h},q_{h})=0\quad\forall q_{h}\in Q_{h}\}. \end{aligned}
\end{equation*}%
The dual norms of $X_{h}$ and $V_{h}$ are 
\begin{equation*}
\Vert w\Vert _{X_{h}^{\ast }}:=\sup_{v_{h}\in X_{h}}\dfrac{(w,v_{h})}{\Vert
\nabla v_{h}\Vert },\Vert w\Vert _{V_{h}^{\ast }}:=\sup_{v_{h}\in V_{h}}%
\dfrac{(w,v_{h})}{\Vert \nabla v_{h}\Vert }.
\end{equation*}%
The following Lemma from Galvin \cite[p. 243]{galvin} establishes the
equivalence of these norms on $V_{h}$. 
\begin{lemma}\label{dual_inequality}
Suppose the discrete inf-sup condition holds, let $w \in V_h$, then there exists $C >0$, independent of $h$, such that 
\begin{equation}\notag
C\|w\|_{X^*_h} \leq \|w\|_{V^*_h} \leq \|w\|_{X^*_h}.
\end{equation}
\end{lemma}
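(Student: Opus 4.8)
The plan is to treat the two inequalities separately, with the right-hand bound being immediate and the left-hand bound carrying all the content. For the upper bound $\|w\|_{V_h^*} \le \|w\|_{X_h^*}$, observe that $V_h \subset X_h$, so the supremum defining $\|w\|_{V_h^*}$ is taken over a subset of the test functions appearing in $\|w\|_{X_h^*}$; the bound follows at once with constant $1$ and uses neither $w \in V_h$ nor the inf-sup condition.

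For the lower bound $C\|w\|_{X_h^*}\le\|w\|_{V_h^*}$, the key idea is to use $w \in V_h$ to discard the part of any test function lying outside $V_h$. Let $\Pi_h : L^2(\Omega)^d \to V_h$ be the $L^2$-orthogonal projection onto $V_h$, and let $v_h \in X_h$ be arbitrary. Decomposing $v_h = \Pi_h v_h + (v_h - \Pi_h v_h)$, the remainder $v_h - \Pi_h v_h$ is $L^2$-orthogonal to $V_h$, so since $w \in V_h$ the cross term vanishes and $(w, v_h) = (w, \Pi_h v_h)$. As $\Pi_h v_h \in V_h$, the definition of $\|\cdot\|_{V_h^*}$ gives $(w,\Pi_h v_h) \le \|w\|_{V_h^*}\,\|\nabla \Pi_h v_h\|$. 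If one can show the $h$-independent $H^1$-stability estimate $\|\nabla \Pi_h v_h\| \le C\|\nabla v_h\|$, then $(w,v_h)/\|\nabla v_h\| \le C\|w\|_{V_h^*}$, and taking the supremum over $v_h \in X_h$ yields $\|w\|_{X_h^*}\le C\|w\|_{V_h^*}$, which is the stated inequality with constant $1/C$.

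\textbf{The main obstacle} is precisely the stability bound $\|\nabla\Pi_h v_h\|\le C\|\nabla v_h\|$, and this is where the discrete inf-sup condition is indispensable: bare $L^2$-orthogonality controls no gradients. I would establish it by the standard route. Setting $z_h = \Pi_h v_h\in V_h$, the remainder $v_h - z_h$ is $L^2$-orthogonal to $V_h$, and the inf-sup condition characterizes such remainders through the discrete divergence and an associated pressure in $Q_h$, thereby bounding $\|\nabla z_h\|$ by $C\beta^{-1}\|\nabla v_h\|$. An equivalent, more transparent device is the bounded right inverse of the discrete divergence supplied by the inf-sup condition: given $v_h$, one constructs $y_h \in X_h$ with $(\nabla\cdot y_h, q_h) = (\nabla\cdot v_h, q_h)$ for all $q_h \in Q_h$ and $\|\nabla y_h\|\le C\beta^{-1}\|\nabla v_h\|$, so that $z_h := v_h - y_h \in V_h$; the subtlety — and the reason the $L^2$-projection is the right object — is that $y_h$ must additionally be taken $L^2$-orthogonal to $V_h$ so that $(w, y_h) = 0$, which forces $z_h = \Pi_h v_h$. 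I expect verifying the $h$-independence of the constant in this stability estimate, the technical content attributed to Galvin, to be the hard step; the remainder of the argument is elementary.
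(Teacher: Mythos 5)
The paper itself contains no proof of this lemma: it is quoted verbatim from Galvin \cite{galvin}, so the only available comparison is with that cited argument, and your skeleton does match how the equivalence is established there and elsewhere in the literature. The upper bound is, as you say, immediate from $V_h\subset X_h$ with constant $1$, and your observation that $(w,v_h)=(w,\Pi_h v_h)$ for $w\in V_h$ (the cross term vanishing by $L^2$-orthogonality) correctly reduces the lower bound to the $h$-uniform $H^1$-stability $\|\nabla \Pi_h v_h\|\le C\|\nabla v_h\|$ of the $L^2$ (discrete Leray) projection onto $V_h$; indeed the lemma is essentially \emph{equivalent} to that stability. You also correctly identify this stability as the crux. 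So the architecture is sound and faithful to the cited source.

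The genuine gap is that neither of your two sketched routes actually yields the stability bound from the inf-sup condition, and you assert rather than prove it. Route (i): writing $e_h:=v_h-\Pi_h v_h$, the functional $\chi_h\mapsto(e_h,\chi_h)$ vanishes on $V_h$, so inf-sup gives a unique $\lambda_h\in Q_h$ with $(e_h,\chi_h)=(\lambda_h,\nabla\cdot\chi_h)$ for all $\chi_h\in X_h$ and $\beta\|\lambda_h\|\le C_P\|e_h\|$; taking $\chi_h=e_h$ and using $(\lambda_h,\nabla\cdot\Pi_h v_h)=0$ gives $\|e_h\|^2=(\lambda_h,\nabla\cdot v_h)$, hence only the \emph{$L^2$} bound $\|e_h\|\le C\|\nabla v_h\|$. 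The pressure representation controls $\|\lambda_h\|$ \emph{by} gradients of $e_h$, not the reverse, so no bound on $\|\nabla e_h\|$ (equivalently $\|\nabla\Pi_h v_h\|$) follows; converting the $L^2$ bound into a gradient bound costs a factor $h^{-1}$ via an inverse inequality, which must be repaid by an improved estimate $\|e_h\|\le Ch\|\nabla v_h\|$ obtained from a duality argument with $H^2$ Stokes regularity (or a Fortin operator stable simultaneously in $L^2$ and $H^1$) --- hypotheses such as mesh quasi-uniformity and domain regularity that are not contained in the inf-sup constant $\beta$ alone. Route (ii) is circular as written: the inf-sup right inverse $y_h$ with $(\nabla\cdot y_h,q_h)=(\nabla\cdot v_h,q_h)$ and $\|\nabla y_h\|\le C\beta^{-1}\|\nabla v_h\|$ is unique only up to an element of $V_h$, and imposing the additional requirement $y_h\perp_{L^2}V_h$ forces $y_h=v_h-\Pi_h v_h$, i.e.\ it discards the representative carrying the gradient bound and selects exactly the one whose gradient bound \emph{is} the stability estimate being proven. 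In short: your reduction is correct and honestly flagged, but the decisive estimate --- the entire technical content of the cited lemma --- is left unproven, and the two mechanisms you propose for it would fail as described.
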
Lemma \ref{dual_inequality} is used to derive pressure error
estimates with a technique shown in Fiordilino \cite{fiordilino_pres}. We
will use the following, easily proven, algebraic identity. 
\begin{lemma}\label{identity}
The following identity holds.
\begin{gather}\label{eqn:theidentity_part2}
\left(\frac{3}{2}a-2b+\frac{1}{2}c\right)\left(\frac{3}{2}a-b+\frac{1}{2}c\right) = \\
\left(\frac{a^2}{4}+\frac{(2a-b)^2}{4}+\frac{(a-b)^2}{4}\right)-\left(\frac{b^2}{4}+\frac{(2b-c)^2}{4}+\frac{(b-c)^2}{4}\right)
+\frac{3}{4}(a-2b+c)^2 \notag
\end{gather}
\end{lemma}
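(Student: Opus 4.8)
The statement is a polynomial identity, so the plan is to verify it by direct expansion, comparing the two sides as homogeneous quadratic forms in the independent scalars $a$, $b$, $c$. Since neither side has a linear or constant part, each is fully determined by the six coefficients of $a^2$, $b^2$, $c^2$, $ab$, $bc$, and $ca$, and the identity holds if and only if these six coefficients agree. I would therefore expand both sides and tabulate coefficients.

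On the left, multiplying out $(\tfrac32 a-2b+\tfrac12 c)(\tfrac32 a-b+\tfrac12 c)$ yields a single quadratic form; the only mild shortcut worth noting is that the two factors differ only in the $b$-coefficient, so the left factor equals the right factor minus $b$, and the product can be written as $Q^2-bQ$ with $Q=\tfrac32 a-b+\tfrac12 c$ if one prefers to cut the bookkeeping. On the right, the first parenthesized group $\tfrac14(a^2+(2a-b)^2+(a-b)^2)$ and the second group $\tfrac14(b^2+(2b-c)^2+(b-c)^2)$ are the same positive-definite quadratic form $E(x,y):=\tfrac14(x^2+(2x-y)^2+(x-y)^2)$ evaluated at $(x,y)=(a,b)$ and $(x,y)=(b,c)$, respectively; I would expand $E(a,b)$ once and obtain $E(b,c)$ by the substitution $(a,b)\mapsto(b,c)$, then add $\tfrac34(a-2b+c)^2$. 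Collecting the six coefficients on each side and comparing them finishes the proof.

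There is essentially no obstacle here beyond careful fraction and sign arithmetic; the lemma is elementary, as the statement already concedes. The content worth emphasizing is structural rather than computational: the right-hand side writes the product as $E(a,b)-E(b,c)+\tfrac34(a-2b+c)^2$, a difference of a positive-definite quadratic form at consecutive index pairs plus a manifestly nonnegative remainder. This is exactly the telescoping, G-stability-type decomposition that will later make the critical product collapse under summation and supply the nonnegative numerical dissipation proportional to $\tfrac34\|a-2b+c\|^2$ in the energy estimate of Theorem \ref{energy stability}. As a safeguard against arithmetic slips I would also spot-check the identity at a few simple inputs such as $(a,b,c)=(1,0,0)$, $(0,1,0)$, $(0,0,1)$, and $(1,1,1)$, which pin down enough coefficients to catch any error.
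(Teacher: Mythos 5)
Your proposal is correct: direct expansion and comparison of the six quadratic coefficients verifies the identity, and the paper itself offers no proof at all, simply labeling the lemma ``easily proven,'' so your computation is exactly the intended (omitted) verification. Your structural remark---that the right-hand side is $E(a,b)-E(b,c)+\tfrac{3}{4}(a-2b+c)^2$ with $E$ the G-norm quadratic form, which telescopes in the energy estimate of Theorem \ref{energy stability} and matches the $G$ matrix computed in the corollary following it---correctly identifies why the decomposition takes this form.
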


With the notation in place, we state the fully discrete method.

\begin{algorithm}[Fully Discrete Method\label{alg:discrete}]
Given $u_{h}^{n-1},u_{h}^{n}\in X^{h}$ (and if necessary, given $p_{h}^{n-1},p_{h}^{n}\in Q^{h}$),  find $(\hat{u}_{h}^{n+1},\hat{p}%
^{n+1})\in (X^{h},Q^{h})$ satisfying 
\begin{align}
\left( \frac{\hat{u}_{h}^{n+1}-u_{h}^{n}}{\Delta t_{n}},v_{h}\right) +b(\hat{u}%
_{h}^{n+1},\hat{u}_{h}^{n+1},v_{h})+\nu (\nabla \hat{u}_{h}^{n+1},\nabla
v_{h})-(\hat{p}_{h}^{n+1},\nabla \cdot v_{h})& =(f(t^{n+1}),v_{h}),
\label{eqn:discrete} \\
(\nabla \cdot \hat{u}^{n+1},q_{h})& =0.  \notag
\end{align}%
for all $(v_{h},q_{h})\in (X^{h},Q^{h})$. Then compute 
\begin{gather*}
u_{h}^{n+1}=\hat{u}_{h}^{n+1}-\frac{\omega_{n+1}}{2\omega_{n+1}+1}\left(\hat{u}_{h}^{n+1} - (1+\omega_{n+1})u_{h}^{n} + \omega_{n+1}u_{h}^{n-1}) \right).
\end{gather*}
Option A: (No pressure filter)
\begin{equation} \notag
p_h^{n+1} = \hat{p}_h^{n+1}.
\end{equation}
Option B:
\begin{equation} \notag
p_{h}^{n+1}=\hat{p}_{h}^{n+1}-\frac{\omega_{n+1}}{2\omega_{n+1}+1}\left(\hat{p}_{h}^{n+1} - (1+\omega_{n+1})p_{h}^{n} + \omega_{n+1}p_{h}^{n-1}) \right).
\end{equation}

\end{algorithm}

The constant time-step stability and error analysis works with the following
equivalent formulation of the method. We stress that what follows is not the
preferred implementation since it only yields one approximation, while
Algorithm \ref{alg:discrete} gives the embedded approximations $\hat{u}%
_{h}^{n+1}$ and $u_{h}^{n+1}$ and an error
estimator.

\begin{algorithm}[Constant time-step, equivalent method\label%
{alg:equiv_discrete}]
Assume the time-step is constant. Given $(u_{h}^{n},p_{h}^{n})$ and $%
(u_{h}^{n-1},p_{h}^{n-1})$, find $(u_{h}^{n+1},p_{h}^{n+1})$ such that for all $(v_h,q_h) \in (X^h,Q^h)$,

\noindent
\textbf{Option A}
\begin{gather}\label{eqn:equiv_method_full}
\left(\frac{\frac{3}{2} u^{n+1}_{h}-2 u^{n}_{h}+\frac{1}{2}
u^{n-1}_{h}}{\Delta t}, v_{h}\right) +b \left( \frac{3}{2} u^{n+1}_{h}-
u^{n}_{h}+\frac{1}{2} u^{n-1}_{h},\frac{3}{2} u^{n+1}_{h}-
u^{n}_{h}+\frac{1}{2} u^{n-1}_{h}, v_{h} \right)
\\
 +\nu \left( \nabla
\left(\frac{3}{2} u^{n+1}_{h}- u^{n}_{h}+\frac{1}{2} u^{n-1}_{h}\right),
\nabla v_{h}\right) -\mathbf{\left(p_h^{n+1}, \nabla\cdot v_{h}\right)} = \left( f^{n+1},
v_{h}\right),\notag
\\
 \left(\nabla\cdot \left(\frac{3}{2} u^{n+1}_{h}-
u^{n}_{h}+\frac{1}{2} u^{n-1}_{h}\right),q_{h}\right)=0,\notag
\end{gather}

\noindent
\textbf{or Option B}
\begin{gather}\label{eqn:equiv_method_full_optb}
\left(\frac{\frac{3}{2} u^{n+1}_{h}-2 u^{n}_{h}+\frac{1}{2}
u^{n-1}_{h}}{\Delta t}, v_{h}\right) +b \left( \frac{3}{2} u^{n+1}_{h}-
u^{n}_{h}+\frac{1}{2} u^{n-1}_{h},\frac{3}{2} u^{n+1}_{h}-
u^{n}_{h}+\frac{1}{2} u^{n-1}_{h}, v_{h} \right)
\\
 +\nu \left( \nabla
\left(\frac{3}{2} u^{n+1}_{h}- u^{n}_{h}+\frac{1}{2} u^{n-1}_{h}\right),
\nabla v_{h}\right) -\mathbf{\left(\frac{3}{2} p^{n+1}_{h}-p^{n}_{h}+\frac{1}{2} p^{n-1}_{h}, \nabla\cdot v_{h}\right)} = \left( f^{n+1},
v_{h}\right),\notag
\\
 \left(\nabla\cdot \left(\frac{3}{2} u^{n+1}_{h}-
u^{n}_{h}+\frac{1}{2} u^{n-1}_{h}\right),q_{h}\right)=0.\notag
\end{gather}
\end{algorithm}
The pressure is highlighted in bold, and is the only difference between the two above equations.
The time difference term of the above equivalent method is that of BDF2 but
the remainder is different. This is \textit{not} the standard BDF2 method. 
\begin{proposition}
Algorithm \ref{alg:discrete}A (respectively B) is equivalent Algorithm \ref{alg:equiv_discrete}A (respectively B).
\end{proposition}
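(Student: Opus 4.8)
The plan is to establish the equivalence through an explicit, invertible change of variables that eliminates the intermediate backward Euler velocity $\hat{u}_h^{n+1}$ (and, under Option B, the intermediate pressure $\hat{p}_h^{n+1}$) in favor of the filtered quantities. Since the time step is constant, the first step is to set $\omega_{n+1}=1$, so that the filter weight $\frac{\omega_{n+1}}{2\omega_{n+1}+1}$ collapses to $\frac{1}{3}$ and the velocity filter in Algorithm \ref{alg:discrete} reads $u_h^{n+1}=\hat{u}_h^{n+1}-\frac{1}{3}(\hat{u}_h^{n+1}-2u_h^n+u_h^{n-1})$. Because the filter is an affine map in $\hat{u}_h^{n+1}$ with fixed history $(u_h^n,u_h^{n-1})$, it is invertible, and the whole proof amounts to inverting it and substituting term by term.

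The key algebraic step is this inversion. Solving the filter relation for $\hat{u}_h^{n+1}$ yields the single identity
\[
\hat{u}_h^{n+1}=\frac{3}{2}u_h^{n+1}-u_h^n+\frac{1}{2}u_h^{n-1},
\]
which I would substitute wherever $\hat{u}_h^{n+1}$ occurs. This combination is exactly the one appearing in the nonlinear form $b(\cdot,\cdot,v_h)$, the viscous term, and the divergence constraint of Algorithm \ref{alg:equiv_discrete}, so those three terms match by direct replacement. For the discrete time derivative, subtracting $u_h^n$ from the identity gives $\hat{u}_h^{n+1}-u_h^n=\frac{3}{2}u_h^{n+1}-2u_h^n+\frac{1}{2}u_h^{n-1}$, which reproduces the BDF2-type difference quotient in (\ref{eqn:equiv_method_full}). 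Thus inserting the inverted identity into the backward Euler momentum equation (\ref{eqn:discrete}) converts it, term by term, into (\ref{eqn:equiv_method_full}) under Option A and into (\ref{eqn:equiv_method_full_optb}) under Option B.

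The pressure treatment is what distinguishes the two options. Under Option A, the relation $p_h^{n+1}=\hat{p}_h^{n+1}$ is already an identification, so the pressure term $-(\hat{p}_h^{n+1},\nabla\cdot v_h)$ becomes $-(p_h^{n+1},\nabla\cdot v_h)$, matching Algorithm \ref{alg:equiv_discrete}A. Under Option B, the same inversion used for the velocity applies verbatim to the pressure filter, giving $\hat{p}_h^{n+1}=\frac{3}{2}p_h^{n+1}-p_h^n+\frac{1}{2}p_h^{n-1}$, so the pressure term becomes $-(\frac{3}{2}p_h^{n+1}-p_h^n+\frac{1}{2}p_h^{n-1},\nabla\cdot v_h)$, matching Algorithm \ref{alg:equiv_discrete}B. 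Finally I would note that since each filter map is affine and invertible for fixed history, the correspondence runs in both directions: any solution of Algorithm \ref{alg:discrete} produces a solution of Algorithm \ref{alg:equiv_discrete} and conversely.

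There is no genuine obstacle here: the entire content is the one-line inversion of the filter together with careful term-by-term matching. The only thing to watch is bookkeeping, namely carrying the constant $\frac{1}{3}$, the coefficients $\frac{3}{2},-1,\frac{1}{2}$, and the sign conventions consistently through the time-difference, nonlinear, viscous, pressure, and divergence terms so that each matches its counterpart exactly.
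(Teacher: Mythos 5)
Your proof is correct and is essentially the same argument as the paper's: the paper also inverts the filter to write $\hat{u}_h^{n+1}=\frac{3}{2}u_h^{n+1}-u_h^{n}+\frac{1}{2}u_h^{n-1}$, substitutes into \eqref{eqn:discrete}, and observes the steps are reversible, treating Option B as a routine modification. Your version merely spells out the term-by-term matching and the $\omega_{n+1}=1$ bookkeeping in more detail, which the paper leaves implicit.
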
%
\begin{proof}We will just prove the case for Option A since the other case is similar.
Let $(u_h^{n+1},p_h^{n+1})$ be the solution to Algorithm \ref{alg:discrete}. By linearity of the time filter, $(u_h^{n+1},p_h^{n+1}) \in (X^h,Q^h)$. We can write $\hat{u}_h^{n+1}$ in terms of $u_h^{n+1}$,$u_h^{n}$, and $u_h^{n-1}$ as
$\hat{u}^{n+1}=\frac{3}{2}u^{n+1}-u^{n}+\frac{1}{2}u^{n-1}$. Substitute this into \eqref{eqn:discrete}. Then $(u_h^{n+1},p_h^{n+1})$ satisfies equation \eqref{eqn:equiv_method_full}. 

These steps can be reversed to show the converse.
\end{proof}

We next define the discrete kinetic energy, viscous and numerical
dissipation terms that arise naturally from a G-stability analysis of
Algorithm \ref{alg:equiv_discrete}, \emph{regardless} of whether Option A or B is used. The (constant time-step) discrete
kinetic energy, discrete viscous energy dissipation rate and the numerical
energy dissipation rate of Algorithm \ref{alg:equiv_discrete} are%
\begin{equation*}
\begin{array}{cl}
\text{discrete energy:} & \mathcal{E}^{n}=  \frac{1}{4}\left[ \Vert
u^{n}\Vert ^{2}+\Vert 2u^{n}-u^{n-1}\Vert ^{2}+\Vert u^{n}-u^{n-1}\Vert ^{2}%
\right] , \\ 
\text{ viscous dissipation:} & \mathcal{D}^{n+1}=  \Delta t\nu ||\nabla
\left( \frac{3}{2}u^{n+1}-u^{n}+\frac{1}{2}u^{n-1}\right) ||^{2}, \\ 
\text{numerical dissipation:} & \mathcal{Z}^{n+1}=  \frac{3}{4}\Vert
u^{n+1}-2u^{n}+u^{n-1}\Vert ^{2} .%
\end{array}%
\end{equation*}

\begin{remark}
As $\Delta t\rightarrow 0$, $\mathcal{E}^{n}$ is consistent with the kinetic
energy $\frac{1}{2}\Vert u\Vert ^{2}$ and $\mathcal{D}^{n}$ is consistent
with the instantaneous viscous dissipation $\nu \Vert \nabla u\Vert ^{2}$.
The numerical dissipation $\mathcal{Z}^{n+1}\approx \frac{3}{4}\Delta
t^{4}\Vert {u_{tt}(t^{n+1})}\Vert ^{2}$, is asymptotically smaller than the
numerical dissipation of backward Euler, $\frac{1}{2}%
\Delta t^{2}\Vert {u_{t}(t^{n+1})}\Vert ^{2}$.

The method's kinetic energy differs from that of BDF2, which is (e.g. \cite%
{LaytonTrenchea}) 
\begin{equation*}
\mathcal{E}_{BDF2}^{n}=\frac{1}{4}\left[ \Vert u^{n}\Vert ^{2}+\Vert
2u^{n}-u^{n-1}\Vert ^{2}\right] 
\end{equation*}%
due to the term $\Vert u^{n}-u^{n-1}\Vert ^{2}$\ in $\mathcal{E}^{n}$\ which
is a dispersive penalization of a discrete acceleration.
\end{remark}

Define the interpolation and difference operators as follows 
\begin{definition}\label{def:int_dif}
The interpolation operator $I$ and difference operator $D$ are
\begin{equation}
I[w^{n+1}]=\frac{3}{2}w^{n+1}-w^{n}+\frac{1}{2}w^{n-1} \quad \text{and}
\quad D[w^{n+1}]=\frac{3}{2}w^{n+1}-2w^{n}+\frac{1}{2}w^{n-1}.  \notag
\end{equation}
\end{definition}

Formally, $I[w(t^{n+1})]=w(t^{n+1}) + \mathcal{O}(\Delta t^2)$, and $\frac{%
D[w(t^{n+1})]}{\Delta t}=w_t(t^{n+1}) + \mathcal{O}(\Delta t^2)$. This will
be made more precise in the consistency error analysis in Section \ref%
{sec:consistency}.

\section{Stability and Error Analysis}
We prove stability and error analysis of the \textit{constant time-step}
method. The velocity proofs parallel ones in \cite{girault} and \cite{JIANG2016388}
and are collected in Appendix \ref{app:velocity}. The pressure analysis is presented in
Section 5.\label{section3} 

\begin{theorem}\label{energy stability}
Assume the stepsize is constant. The following equality holds.
\begin{equation}\notag
\mathcal{E}^{N} + \sum_{n=1}^{N-1}\mathcal{D}^{n+1} +  \sum_{n=1}^{N-1}\mathcal{Z}^{n+1} = \Delta t\sum_{n=1}^{N-1}(f,I[u_h^{n+1}]) + \mathcal{E}^1.
\end{equation}
\end{theorem}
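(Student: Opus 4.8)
The plan is to run the standard G-stability energy argument on the equivalent one-leg formulation, Algorithm~\ref{alg:equiv_discrete}. First I would test the momentum equation \eqref{eqn:equiv_method_full} with the discretely divergence-free combination $v_h = I[u_h^{n+1}] = \frac{3}{2}u_h^{n+1} - u_h^n + \frac{1}{2}u_h^{n-1}$, i.e. the velocity test function is chosen to be exactly the argument already appearing in the viscous and nonlinear terms. This choice is the crux. The third (incompressibility) line of Algorithm~\ref{alg:equiv_discrete} asserts $(\nabla\cdot I[u_h^{n+1}], q_h) = 0$ for all $q_h\in Q^h$, so with $v_h = I[u_h^{n+1}]$ the pressure term $(p_h^{n+1}, \nabla\cdot v_h)$ vanishes under Option A; under Option B the pressure argument $I[p_h^{n+1}]$ still lies in $Q^h$, so that term vanishes as well. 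This is precisely why the identity holds regardless of which option is used.

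Two terms then collapse immediately. The nonlinear term is $b(I[u_h^{n+1}], I[u_h^{n+1}], I[u_h^{n+1}])$, which is zero by the skew-symmetry of $b(\cdot,\cdot,\cdot)$ (the defining property $b(w,v,v)=0$ of the skew-symmetrized form). The viscous term becomes $\nu\|\nabla I[u_h^{n+1}]\|^2$, and after multiplying the whole equation through by $\Delta t$ this is exactly $\mathcal{D}^{n+1}$. The forcing contributes $\Delta t(f^{n+1}, I[u_h^{n+1}])$ on the right-hand side.

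The heart of the proof is the time-difference term. After multiplying by $\Delta t$ it reads $(D[u_h^{n+1}], I[u_h^{n+1}])$, with $D[u_h^{n+1}] = \frac{3}{2}u_h^{n+1} - 2u_h^n + \frac{1}{2}u_h^{n-1}$. I would apply the algebraic identity of Lemma~\ref{identity} pointwise, with $(a,b,c) = (u_h^{n+1}(x), u_h^n(x), u_h^{n-1}(x))$, and integrate over $\Omega$. Recognizing the $L^2$ inner products, the first bracket is $\mathcal{E}^{n+1}$, the second bracket is $\mathcal{E}^n$, and the remainder $\frac{3}{4}\|u_h^{n+1} - 2u_h^n + u_h^{n-1}\|^2$ is $\mathcal{Z}^{n+1}$. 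Thus $(D[u_h^{n+1}], I[u_h^{n+1}]) = \mathcal{E}^{n+1} - \mathcal{E}^n + \mathcal{Z}^{n+1}$, the discrete relation that makes the scheme G-stable. This is the only nonroutine step; everything else is bookkeeping.

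Collecting the pieces yields, for each $n$, the one-step balance $\mathcal{E}^{n+1} - \mathcal{E}^n + \mathcal{Z}^{n+1} + \mathcal{D}^{n+1} = \Delta t(f^{n+1}, I[u_h^{n+1}])$. Finally I would sum this over $n = 1, \dots, N-1$; the energy differences telescope to $\mathcal{E}^N - \mathcal{E}^1$, and rearranging produces the stated equality. Because each step is an exact identity (no Cauchy--Schwarz or Young bounding is used, and the forcing term is simply carried along), the conclusion is an equality rather than an inequality, and unconditional nonlinear energy stability follows at once since $\mathcal{D}^{n+1}, \mathcal{Z}^{n+1} \ge 0$.
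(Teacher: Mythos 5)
Your proposal is correct and follows essentially the same route as the paper's proof: test \eqref{eqn:equiv_method_full} with $v_h = I[u_h^{n+1}]$ (taking $q_h = p_h^{n+1}$ for Option A or $q_h = I[p_h^{n+1}]$ for Option B so the pressure term cancels), kill the nonlinearity by skew-symmetry, convert the time-difference term via Lemma \ref{identity} into $\mathcal{E}^{n+1}-\mathcal{E}^{n}+\mathcal{Z}^{n+1}$, and telescope the sum over $n=1,\dots,N-1$. Nothing is missing, and your observation that every step is an exact identity, so the result is an equality independent of the pressure option, matches the paper exactly.
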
%
\begin{proof}
In Algorithm \ref{alg:equiv_discrete}, set $v_h = I[u_h^{n+1}]$ and $q_h = p_h^{n+1}$ for Option A, or $q_h = I[p_h^{n+1}]$ for Option B, and add.
\begin{equation}\label{eqn:stab-innerprod}
(D[u_h^{n+1}],I[u_h^{n+1}])
 + \mathcal{D}^{n+1}
=
\Delta t( f,I[u_h^{n+1}]).
\end{equation}
By Lemma \ref{identity} and Definition \ref{def:int_dif},
\begin{equation}\notag
(D[u_h^{n+1}],I[u_h^{n+1}]) = \mathcal{E}^{n+1} - \mathcal{E}^{n} + \mathcal{Z}^{n+1}.
\end{equation}
Thus, (\ref{eqn:stab-innerprod}) can be written
\begin{equation}\notag
\mathcal{E}^{n+1} - \mathcal{E}^{n} + \mathcal{D}^{n+1} + \mathcal{Z}^{n+1} = \Delta t( f(t^{n+1}),I[u_h^{n+1}]).
\end{equation}
Summing over $n$ from $1$ to $N-1$ yields the result.
\end{proof}

This result is for the time stepping method applied to the Navier-Stokes
equations. More generally, the constant time-step method of Algorithm 3.2 is 
$G$-Stable, a fact that follows from the equivalence of $A$ and $G$%
-Stability \cite{dahlquist78}. We calculate the $G$ matrix explicitly below. 
\begin{corollary}
Assume the time-step is constant. Backward Euler followed by the time filter is $G$-Stable with $G$ matrix
\begin{equation}\notag
G = 
\begin{bmatrix}
\frac{3}{2} & -\frac{3}{4} \\
-\frac{3}{4} & \frac{1}{2}
\end{bmatrix}.
\end{equation}
\begin{proof}
Simply check that
\begin{equation}\notag
[u^n,u^{n-1}]G 
\begin{bmatrix}
u^n \\
u^{n-1}
\end{bmatrix}
= \frac{1}{4}\Big[|u^{n}|^{2}+|2u^{n}-u^{n-1}|^{2}+%
|u^{n}-u^{n-1}|^2\Big].
\end{equation}
\end{proof}
\end{corollary}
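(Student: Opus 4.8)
The plan is to recognize that $G$-stability requires producing a symmetric positive definite matrix $G$ whose associated quadratic form coincides with the discrete energy $\mathcal{E}^{n}$ from Theorem \ref{energy stability}, and then to verify monotonicity of the resulting $G$-norm. The existence of such a $G$ is guaranteed in the abstract by the Dahlquist equivalence of $A$- and $G$-stability \cite{dahlquist78}, since the constant-step method is equivalent to an $A$-stable two-step one-leg method; the only real content of the corollary is to exhibit $G$ explicitly and confirm it is positive definite. I would therefore treat the statement as an identification-plus-verification problem rather than a fresh stability argument.

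First I would write the general symmetric form $[u^{n},u^{n-1}]\,G\,[u^{n},u^{n-1}]^{\top} = a\,|u^{n}|^{2} + 2b\,(u^{n},u^{n-1}) + c\,|u^{n-1}|^{2}$ and expand the discrete energy
\[
\mathcal{E}^{n} = \tfrac{1}{4}\big[\,|u^{n}|^{2} + |2u^{n}-u^{n-1}|^{2} + |u^{n}-u^{n-1}|^{2}\,\big]
\]
into the same monomials $|u^{n}|^{2}$, $(u^{n},u^{n-1})$, $|u^{n-1}|^{2}$. Collecting coefficients gives $\mathcal{E}^{n} = \tfrac{3}{2}|u^{n}|^{2} - \tfrac{3}{2}(u^{n},u^{n-1}) + \tfrac{1}{2}|u^{n-1}|^{2}$, so matching against $a$, $2b$, $c$ forces $a = \tfrac{3}{2}$, $b = -\tfrac{3}{4}$, $c = \tfrac{1}{2}$, which is precisely the stated $G$. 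I would then check positive definiteness by Sylvester's criterion: the leading entry $\tfrac{3}{2} > 0$ and $\det G = \tfrac{3}{4} - \tfrac{9}{16} = \tfrac{3}{16} > 0$.

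Finally, I would close the loop with the identity already derived inside the proof of Theorem \ref{energy stability}, namely $(D[u_{h}^{n+1}], I[u_{h}^{n+1}]) = \mathcal{E}^{n+1} - \mathcal{E}^{n} + \mathcal{Z}^{n+1}$, which follows from Lemma \ref{identity} and Definition \ref{def:int_dif}. Writing $\mathcal{E}^{n} = \|(u^{n},u^{n-1})\|_{G}^{2}$ and using $\mathcal{Z}^{n+1} \ge 0$, this yields the defining $G$-stability inequality $(D[u_{h}^{n+1}], I[u_{h}^{n+1}]) \ge \|(u^{n+1},u^{n})\|_{G}^{2} - \|(u^{n},u^{n-1})\|_{G}^{2}$. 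The main obstacle is essentially bookkeeping: there is no analytic difficulty, only the need to respect the factor-of-two convention on the off-diagonal entry (the quadratic form contributes $2b$, not $b$) and to verify definiteness. The stability content has already been supplied by the energy theorem, so the corollary amounts to reading $G$ off the energy form and checking its spectrum.
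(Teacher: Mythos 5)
Your proposal is correct and takes essentially the same route as the paper: the paper's entire proof consists of verifying that the quadratic form of $G$ equals the discrete energy $\mathcal{E}^{n}$, which is precisely your coefficient-matching computation ($a=\tfrac{3}{2}$, $b=-\tfrac{3}{4}$, $c=\tfrac{1}{2}$). Your additional steps---positive definiteness via Sylvester's criterion and the $G$-norm decay inequality obtained from Lemma \ref{identity} and $\mathcal{Z}^{n+1}\geq 0$---simply make explicit what the paper leaves to the surrounding text (Theorem \ref{energy stability} and the cited Dahlquist equivalence of $A$- and $G$-stability), so no correction is needed.
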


\subsection{Consistency error}

\label{sec:consistency} By manipulating \eqref{eqn:nse_var_form}, we derive
the consistency error. The true solution to \eqref{eqn:nse_var_form}
satisfies 
\begin{equation}  \label{eqn:true_with_consistency}
\begin{aligned} &\left( \frac{D[u(t^{n+1})]}{\Delta t}, v_{h}\right)
+b\left( I[u(t^{n+1})],I[u(t^{n+1})], v_{h}\right) \\ &+\nu\left( \nabla
I[u(t^{n+1})], \nabla v_{h}\right) -\left( {p}(t^{n+1}),\nabla \cdot
v_{h}\right) \\ &=\left( \mathbf{f}^{n+1}, v_{h}\right) +\tau^{n+1}(u,p;v_h)
\quad \forall v_{h} \in {X}_{h}.\\ \end{aligned}
\end{equation}
If Option A is used (pressure is unfiltered),
\begin{gather}
\tau^{n+1}(u,p;v_h) =\tau_A^{n+1}(u,p;v_h)  \defeq \left( \frac{D[u(t^{n+1})]}{\Delta t}- u_{t}{(t^{n+1})%
}, v_{h}\right)  \label{eqn:consistency_defn} \\
+b\left(I[u(t^{n+1})],I[u(t^{n+1})], v_{h}\right) \notag -
b(u(t^{n+1}),u(t^{n+1}),v_h)  \notag 
+\nu \left( \nabla (I[u(t^{n+1})] - u(t^{n+1})), \nabla v_{h}\right) \notag
\end{gather}
If Option B is used (pressure is filtered),
\begin{gather}
\tau^{n+1}(u,p;v_h) = \tau_A^{n+1}(u,p;v_h) - \left(I[p(t^{n+1})]-p(t^{n+1}),\nabla \cdot v_h\right)
  \label{eqn:consistency_defnb}
\end{gather}
Thus, filtering the pressure introduces a term that, while still second order, adds to the consistency error. We believe this is why Option A performs better in the numerical tests, Figure \ref{fig:convergence}. Furthermore, Option B requires assuming additional regularity for convergence, see Theorem \ref{convergency}.

The terms in the consistency error are bounded in the following lemma. 
\begin{lemma}[Consistency] For $u$, $p$ sufficiently smooth, we have \label{consis_lemma}
\begin{equation}\notag
\bigg\|{\frac{D[u(t^{n+1})]}{\Delta t}} - u_t(t^{n+1})\bigg\|^2 \leq \frac{6}{5}\Delta t^3\int_{t^{n-1}}^{t^{n+1}} \|u_{ttt}\|^2dt, 
\end{equation}
\begin{equation}
\bigg\|I[u(t^{n+1})] - u(t^{n+1})\bigg\|^2 \leq \frac{4}{3}\Delta t^3 \int_{t^{n-1}}^{t^{n+1}} \|u_{tt}\|^2 dt.
\end{equation}
\begin{equation}
\bigg\|I[p(t^{n+1})] - p(t^{n+1})\bigg\|^2 \leq \frac{4}{3}\Delta t^3 \int_{t^{n-1}}^{t^{n+1}} \|p_{tt}\|^2 dt.
\end{equation}
\end{lemma}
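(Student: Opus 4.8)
The plan is to derive all three estimates from Taylor's theorem with the integral form of the remainder, expanded about $t^{n+1}$, exploiting the precise order of accuracy of the operators $I$ and $D$ from Definition~\ref{def:int_dif}. First I would record the two algebraic facts that drive everything: since $\frac32-1+\frac12=1$ and the first moment vanishes, one has $I[w(t^{n+1})]-w(t^{n+1})=\frac12\big(w(t^{n+1})-2w(t^n)+w(t^{n-1})\big)$, i.e. $I$ reproduces affine functions exactly; and a direct check shows $D[\cdot]/\Delta t$ reproduces polynomials up to degree two exactly, so that $D[w(t^{n+1})]-\Delta t\,w_t(t^{n+1})$ annihilates constant, linear, and quadratic terms. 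Consequently the interpolation errors are controlled by $w_{tt}$ and the differentiation error by $w_{ttt}$, matching the three right-hand sides.

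For the first (highest-order) bound I would write $u(t^n)$ and $u(t^{n-1})$ via their third-order Taylor expansions about $t^{n+1}$ with integral remainders containing $u_{ttt}$. Substituting into $D[u(t^{n+1})]-\Delta t\,u_t(t^{n+1})$, the zeroth, first, and second order terms cancel by the exactness just noted, leaving a remainder that can be written as a single integral $\int_{t^{n-1}}^{t^{n+1}}K(s)\,u_{ttt}(s)\,ds$ against an explicit, piecewise-polynomial Peano kernel $K$ supported on $[t^{n-1},t^{n+1}]$. Dividing by $\Delta t$, taking the $L^2(\Omega)$ norm, and applying Cauchy--Schwarz in $s$ (pointwise in $x$, then integrating over $\Omega$ via Fubini, or equivalently Minkowski's integral inequality followed by Cauchy--Schwarz in time) yields $\big\|D[u(t^{n+1})]/\Delta t-u_t(t^{n+1})\big\|^2\le \Delta t^{-2}\big(\int K^2\big)\int_{t^{n-1}}^{t^{n+1}}\|u_{ttt}\|^2\,ds$; computing $\int K^2$ produces the factor $\tfrac65\Delta t^3$ after absorbing the $\Delta t^{-2}$.

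The second and third bounds are the same computation one order lower. Using $I[u(t^{n+1})]-u(t^{n+1})=\frac12(u(t^{n+1})-2u(t^n)+u(t^{n-1}))$ and second-order Taylor expansions, the constant and linear terms cancel and the error equals $\frac12\int_{t^{n-1}}^{t^{n+1}}\phi(s)\,u_{tt}(s)\,ds$ with $\phi$ the hat function ($\phi(s)=t^{n+1}-s$ on $[t^n,t^{n+1}]$, $\phi(s)=s-t^{n-1}$ on $[t^{n-1},t^n]$). The identical Cauchy--Schwarz step gives the $\tfrac43\Delta t^3\int\|u_{tt}\|^2$ estimate, and since the pressure enters $I$ in exactly the same linear combination, the $p$-bound follows verbatim with $p_{tt}$ in place of $u_{tt}$. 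The only real care — more a bookkeeping matter than a genuine obstacle — is justifying the interchange of the spatial $L^2(\Omega)$ norm with the temporal integral (Minkowski's integral inequality for the Bochner-valued integrand) and carrying out the explicit kernel integrations that pin down the constants $\tfrac65$ and $\tfrac43$; both are routine once the remainders are assembled.
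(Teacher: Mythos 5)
Your proposal is correct and follows essentially the same route as the paper's proof in Appendix A: Taylor expansion about the relevant node with integral remainder, cancellation of the low-order terms by the exactness of $D$ and $I$, Cauchy--Schwarz in time against the polynomial kernels, and integration over $\Omega$ to pass to the $L^2$ norm. The only cosmetic difference is that you assemble a single combined Peano kernel and apply Cauchy--Schwarz once (which yields constants at least as sharp as $\tfrac{6}{5}$ and $\tfrac{4}{3}$), whereas the paper bounds the two remainder integrals separately and combines them via $(a+b)^2\le 2a^2+2b^2$.
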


\begin{proof}
See Appendix \ref{app:velocity}.
\end{proof}

\subsection{Error estimates for the velocity}

\label{section4} Next, we analyze the convergence of Algorithm \ref%
{alg:equiv_discrete} and give an error estimate for the velocity. Let $%
t^{n}=n\Delta t$. Denote the errors $\mathbf{e}_{u}^{n}=u(t^{n})-u_{h}^{n}$
and $e_{p}^{n}=p(t^{n})-p_{h}^{n}$. 

\begin{theorem}\label{convergency}
Assume that the true solution $( u,p)$ satisfies the following regularity 
\begin{equation}\label{regularity}
\begin{aligned}
&u\in L^\infty(0,T;(H^{k+1}\Omega))^d),
&u_t \in L^2(0,T;(H^{k+1}\Omega))^d),
&\quad u_{tt}\in L^2(0,T;(H^{1}\Omega))^d),\\
&u_{ttt} \in L^2(0,T;(L^{2}\Omega))^d),
&p \in L^2(0,T;(H^{s+1}(\Omega))^d).
\end{aligned}
\end{equation}
Additionally for Option B, assume $p_{tt} \in L^2(0,T;(L^2(\Omega))^d$.
For $( u^{n+1}_h,p^{n+1}_h)$ satisfying $(\ref{eqn:equiv_method_full})$, we have the following estimate
\begin{equation}\label{velocity_error}
\begin{aligned}
&\|{e}^{N}_{ u}\|^2+\|2{e}^{N}_{ u}-{e}^{N-1}_{ u}\|^2+\|{e}^{N}_{ u}-{e}^{N-1}_{ u}\|^2+\sum_{n=1}^{N-1} 3 \|{e}^{n+1}_{ u}-2{e}^{n}_{ u}+{e}^{n-1}_{ u}\|^2 \\
&+\nu \Delta t \sum_{n=1}^{N-1} \|\nabla I[{e}^{n+1}_{u}]\|^2 \leq
C\Big(h^{2k}+h^{2s+2}+\Delta t ^4\Big)
\end{aligned}
\end{equation} 
\end{theorem}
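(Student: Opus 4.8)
The plan is to derive an error equation by subtracting the discrete scheme (Algorithm 3.2, Option A) from the consistency equation \eqref{eqn:true_with_consistency}, then test with the discrete-divergence-free interpolant $I[\mathbf{e}_u^{n+1}]$ so the pressure terms drop out, and finally apply the discrete Gronwall inequality (Lemma \ref{discrete_gronwall}). Following the standard splitting for finite element error analysis, I would write $\mathbf{e}_u^n = (u(t^n) - \tilde{u}^n) - (u_h^n - \tilde{u}^n) =: \eta^n - \phi_h^n$, where $\tilde{u}^n \in V_h$ is a suitable approximation (e.g.\ the Stokes or $L^2$ projection) of $u(t^n)$. The interpolation-error part $\eta^n$ is controlled by the approximation properties of $X_h$, $Q_h$, contributing the $h^{2k}$ and $h^{2s+2}$ terms; the discrete part $\phi_h^n \in V_h$ is what I would actually estimate via energy methods.

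The key algebraic engine is the identity in Lemma \ref{identity} together with Definition \ref{def:int_dif}: testing the error equation with $I[\phi_h^{n+1}]$ reproduces exactly the telescoping structure from Theorem \ref{energy stability}, so the time-difference term $(D[\phi_h^{n+1}], I[\phi_h^{n+1}])$ becomes a perfect $G$-stable quadratic form yielding $\mathcal{E}_\phi^{n+1} - \mathcal{E}_\phi^n + \mathcal{Z}_\phi^{n+1}$, matching the four squared-difference terms on the left of \eqref{velocity_error}. The viscous term gives $\nu\Delta t\|\nabla I[\phi_h^{n+1}]\|^2$. On the right I would collect: the consistency error $\tau^{n+1}$ bounded by Lemma \ref{consis_lemma} (giving $\Delta t^4$ after summing, since each term carries $\Delta t^3$ against an integral and the outer $\Delta t$ sum converts the Riemann sum to an integral), the projection-error contributions from $\eta^n$, and the nonlinear terms.

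The main obstacle will be the nonlinear term. After subtracting, the trilinear difference $b(I[u(t^{n+1})], I[u(t^{n+1})], I[\phi_h^{n+1}]) - b(I[u_h^{n+1}], I[u_h^{n+1}], I[\phi_h^{n+1}])$ must be split (add and subtract an intermediate argument) into terms linear in $\eta$ and in $\phi_h$. Using the two bounds of Lemma \ref{nonlinear_inequality}, Young's inequality, and the regularity hypotheses \eqref{regularity}, I would absorb the $\|\nabla I[\phi_h^{n+1}]\|^2$ pieces into the viscous term (choosing the Young coefficients small relative to $\nu$) and leave the remaining factors as Gronwall data $d_n$ and forcing $c_n$. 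Here the skew-symmetry of $b$ kills the most dangerous term $b(\cdot,\cdot,I[\phi_h^{n+1}])$ when both nonlinear arguments are $I[\phi_h^{n+1}]$, which is essential for closing the estimate.

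Finally I would sum over $n$ from $1$ to $N-1$, check the Gronwall smallness condition $\Delta t\, d_n < 1$ (satisfied for $\Delta t$ below a constant depending on $\nu$ and $\|\nabla u\|_{L^\infty}$), invoke Lemma \ref{discrete_gronwall} to convert the $\sum d_n a_n$ term into the exponential constant $C$, optimize the projection choice to get the stated powers of $h$, and apply the triangle inequality to pass from $\phi_h$ back to $\mathbf{e}_u$. For Option B the only change is the extra consistency term in \eqref{eqn:consistency_defnb}, bounded by the third estimate in Lemma \ref{consis_lemma}, which explains the additional $p_{tt}$ regularity requirement but preserves the $\Delta t^4$ rate.
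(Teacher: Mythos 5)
Your proposal follows essentially the same route as the paper's proof in Appendix \ref{app:velocity}: the same $\eta$--$\phi_h$ splitting with a best approximation in $V_h$, testing with $I[\phi_h^{n+1}]$ to invoke the $G$-stability identity of Lemma \ref{identity}, skew-symmetry to eliminate $b(I[u_h^{n+1}],I[\phi_h^{n+1}],I[\phi_h^{n+1}])$, Young's inequality to absorb gradient terms into the viscous dissipation, the discrete Gronwall inequality under a $\Delta t$ smallness condition, and a final triangle inequality, with Option B handled exactly as in the paper via the extra $p_{tt}$ consistency term. The only slip is cosmetic: you initially say you test with $I[\mathbf{e}_u^{n+1}]$ and that the pressure ``drops out,'' whereas only the discrete pressure vanishes on $V_h$ and the term $\left(p(t^{n+1})-\lambda_h^{n+1},\nabla\cdot I[\phi_h^{n+1}]\right)$ survives to produce the $h^{2s+2}$ contribution --- both points you correct later in the proposal.
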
\begin{proof}
See Appendix \ref{app:velocity}.
\end{proof}

\section{Pressure Stability and Convergence}

\subsection{Stability of Pressure}

\label{sec:pressure_stability} We introduce the following discrete norms 
\begin{equation}
\begin{aligned} &\||\omega\||_{\infty,k}:=\max_{0\leq n\leq T/ \Delta t}
\|\omega^n\|_k, \quad \||\omega\||_{2,k}:=\left( \sum_{n=0}^{T/ \Delta t -1}
\Delta t \|\omega^n\|^2_k \right)^{1/2}. \end{aligned}  \label{discrete_norm}
\end{equation}%
In this section, we prove that the pressure approximation is stable in $l^{1}(0,T;L^{2}(\Omega ))$. We first
give a corollary of Theorem \ref{energy stability} asserting the stability
of the velocity approximation. 
\begin{corollary}\label{velocity_stability}
Suppose $f \in L^2(0,T;H^{-1}(\Omega)^d)$, then the velocity approximation satisfies
\begin{equation}\notag
\mathcal{E}^{N} + \frac{1}{2}\sum_{n=1}^{N-1}\mathcal{D}^{n+1} +  \sum_{n=1}^{N-1}\mathcal{Z}^{n+1} \leq  \frac{1}{2\nu}\||f\||^2_{2,-1} + \mathcal{E}^1.
\end{equation}
\end{corollary}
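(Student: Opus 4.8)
The plan is to invoke the energy \emph{equality} of Theorem~\ref{energy stability} and reduce the proof to controlling a single quantity: the forcing contribution $\Delta t\sum_{n=1}^{N-1}(f,I[u_h^{n+1}])$ on its right-hand side. Since every other term in that identity ($\mathcal{E}^N$, $\mathcal{E}^1$, and the two dissipation sums) already appears verbatim in the desired estimate, the only work is to dominate this forcing sum by $\tfrac{1}{2\nu}\||f\||_{2,-1}^2$ plus a multiple of the viscous dissipation small enough to be moved onto the left-hand side.

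First I would estimate each summand. Because $I[u_h^{n+1}]=\tfrac{3}{2}u_h^{n+1}-u_h^{n}+\tfrac{1}{2}u_h^{n-1}\in X^h\subset H_0^1(\Omega)^d$, the duality pairing against $f(t^{n+1})\in H^{-1}(\Omega)^d$ gives $(f(t^{n+1}),I[u_h^{n+1}])\le \|f(t^{n+1})\|_{-1}\,\|\nabla I[u_h^{n+1}]\|$. Applying Young's inequality with the weight tuned to the viscosity, namely $ab\le \tfrac{1}{2\nu}a^2+\tfrac{\nu}{2}b^2$, yields
\begin{equation}\notag
\Delta t\,(f(t^{n+1}),I[u_h^{n+1}])\le \frac{\Delta t}{2\nu}\|f(t^{n+1})\|_{-1}^2+\frac{\Delta t\,\nu}{2}\|\nabla I[u_h^{n+1}]\|^2 .
\end{equation}
The crucial observation is that the second term on the right is exactly $\tfrac{1}{2}\mathcal{D}^{n+1}$, by the definition $\mathcal{D}^{n+1}=\Delta t\,\nu\|\nabla I[u^{n+1}]\|^2$.

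Summing from $n=1$ to $N-1$ bounds the forcing term by $\tfrac{1}{2\nu}\,\Delta t\sum_{n=1}^{N-1}\|f(t^{n+1})\|_{-1}^2+\tfrac{1}{2}\sum_{n=1}^{N-1}\mathcal{D}^{n+1}$, and the first piece is at most $\tfrac{1}{2\nu}\||f\||_{2,-1}^2$, since that discrete norm sums $\Delta t\|f^n\|_{-1}^2$ over an index set at least as large. Substituting into the identity of Theorem~\ref{energy stability} and transferring the $\tfrac{1}{2}\sum_{n=1}^{N-1}\mathcal{D}^{n+1}$ term to the left leaves precisely $\mathcal{E}^N+\tfrac{1}{2}\sum_{n=1}^{N-1}\mathcal{D}^{n+1}+\sum_{n=1}^{N-1}\mathcal{Z}^{n+1}\le \tfrac{1}{2\nu}\||f\||_{2,-1}^2+\mathcal{E}^1$, which is the claim. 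There is no genuine obstacle: the result is a one-line consequence of the energy equality together with Cauchy--Schwarz and Young's inequality. The only point requiring any care is choosing the Young weight to be $1/\nu$ so that exactly half---and no more---of the viscous dissipation is consumed, leaving a nontrivial $\tfrac{1}{2}\mathcal{D}^{n+1}$ retained on the left.
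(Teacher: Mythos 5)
Your proposal is correct and is essentially the paper's own argument: the paper's one-line proof (``Consider Theorem \ref{energy stability}. Applying the Cauchy--Schwartz yields the inequality.'') is precisely the duality pairing plus $\nu$-weighted Young's inequality absorption that you spell out in full. Your version simply makes explicit the steps the paper leaves implicit, including the key choice of weight that retains $\tfrac{1}{2}\sum_{n=1}^{N-1}\mathcal{D}^{n+1}$ on the left.
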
%
\begin{proof}
Consider Theorem $\ref{energy stability}$. Applying the  Cauchy-Schwartz yields the inequality.
\end{proof}

We now prove the stability of the filtered pressure. 
\begin{theorem}\label{thm_pressure_stability}
Suppose Corollary $\ref{velocity_stability}$ holds, then the pressure approximation satisfies
\begin{equation}
\begin{aligned}
&\beta \Delta t \sum_{n=1}^{N-1} \|p_h^{n+1}\| \leq C &\text{for Option A},\\
&\beta \Delta t \sum_{n=1}^{N-1} \|I[p_h^{n+1}]\| \leq C &\text{for Option B}.
\end{aligned}
\end{equation}
\end{theorem}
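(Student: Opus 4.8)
The plan is to control the pressure through the discrete inf-sup condition, reducing the estimate to a bound on the dual norm of the momentum residual, and to handle the one dangerous term (the discrete acceleration) by the Fiordilino dual-norm trick based on Lemma \ref{dual_inequality}. Fix $n$ and treat Option A first. The inf-sup condition gives $\beta\|p_h^{n+1}\|\le\sup_{v_h\in X^h}(p_h^{n+1},\nabla\cdot v_h)/\|\nabla v_h\|$, and solving \eqref{eqn:equiv_method_full} for the pressure pairing shows the numerator equals $(\frac{D[u_h^{n+1}]}{\Delta t},v_h)+b(I[u_h^{n+1}],I[u_h^{n+1}],v_h)+\nu(\nabla I[u_h^{n+1}],\nabla v_h)-(f^{n+1},v_h)$. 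The nonlinear, viscous and forcing pieces are benign: Lemma \ref{nonlinear_inequality} bounds $b$ by $C\|\nabla I[u_h^{n+1}]\|^2\|\nabla v_h\|$, while Cauchy--Schwarz handles the other two, so after dividing by $\|\nabla v_h\|$ they contribute $C\|\nabla I[u_h^{n+1}]\|^2$, $\nu\|\nabla I[u_h^{n+1}]\|$ and $\|f^{n+1}\|_{-1}$.

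The main obstacle is the discrete acceleration term $(\frac{D[u_h^{n+1}]}{\Delta t},v_h)$. A crude bound $\frac{C}{\Delta t}\|D[u_h^{n+1}]\|\,\|\nabla v_h\|$ is fatal, since after multiplying by $\Delta t$ and summing it leaves $\sum_n\|D[u_h^{n+1}]\|$, a sum with no $\Delta t$ weight that the energy estimates (which only control $\sum\mathcal{Z}^{n+1}$ and $\mathcal{E}^n$) cannot keep bounded as $\Delta t\to0$. I would instead exploit that every iterate is discretely divergence free: since $I[u_h^{n+1}]\in V_h$ by \eqref{eqn:equiv_method_full} and $D[u_h^{n+1}]=I[u_h^{n+1}]-u_h^n$, induction from divergence-free initial data gives $D[u_h^{n+1}]\in V_h$. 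Lemma \ref{dual_inequality} then permits replacing the $X_h^*$ norm by the $V_h^*$ norm, $\|D[u_h^{n+1}]/\Delta t\|_{X_h^*}\le C^{-1}\|D[u_h^{n+1}]/\Delta t\|_{V_h^*}$. On $V_h$ the pressure pairing in \eqref{eqn:equiv_method_full} vanishes, so for $v_h\in V_h$ one has $(\frac{D[u_h^{n+1}]}{\Delta t},v_h)=-b(I[u_h^{n+1}],I[u_h^{n+1}],v_h)-\nu(\nabla I[u_h^{n+1}],\nabla v_h)+(f^{n+1},v_h)$, and hence $\|D[u_h^{n+1}]/\Delta t\|_{V_h^*}\le C\|\nabla I[u_h^{n+1}]\|^2+\nu\|\nabla I[u_h^{n+1}]\|+\|f^{n+1}\|_{-1}$, the same three harmless quantities.

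Collecting the estimates yields $\beta\|p_h^{n+1}\|\le C(\|\nabla I[u_h^{n+1}]\|^2+\nu\|\nabla I[u_h^{n+1}]\|+\|f^{n+1}\|_{-1})$. I would then multiply by $\Delta t$, sum over $n$ from $1$ to $N-1$, and close using Corollary \ref{velocity_stability}. The quadratic term is immediate, $\Delta t\sum\|\nabla I[u_h^{n+1}]\|^2=\nu^{-1}\sum\mathcal{D}^{n+1}\le C$; the linear viscous term follows from discrete Cauchy--Schwarz, $\nu\Delta t\sum\|\nabla I[u_h^{n+1}]\|\le \nu T^{1/2}(\Delta t\sum\|\nabla I[u_h^{n+1}]\|^2)^{1/2}\le C$; and the forcing term likewise is $\le T^{1/2}\||f\||_{2,-1}\le C$. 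This gives the Option A bound.

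For Option B the only change is that the pressure pairing in \eqref{eqn:equiv_method_full_optb} involves $I[p_h^{n+1}]=\frac{3}{2} p_h^{n+1}-p_h^n+\frac{1}{2} p_h^{n-1}$ rather than $p_h^{n+1}$, so the inf-sup condition now controls $\beta\|I[p_h^{n+1}]\|$. Since the velocity equation, and therefore every estimate above, is identical for the two options, the same argument delivers the stated $\ell^1(0,T;L^2)$ bound on $I[p_h^{n+1}]$.
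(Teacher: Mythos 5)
Your proposal is correct and follows essentially the same route as the paper's proof: test against discretely divergence-free functions to bound the discrete acceleration $D[u_h^{n+1}]/\Delta t$ in the $V_h^*$ norm, transfer to the $X_h^*$ norm via Lemma \ref{dual_inequality}, recover the pressure through the discrete inf-sup condition, and close by summing and invoking Corollary \ref{velocity_stability}. Your explicit inductive verification that $D[u_h^{n+1}]\in V_h$ (the hypothesis needed to invoke Lemma \ref{dual_inequality}) is a detail the paper leaves implicit.
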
%
\begin{proof} We prove it for Option A, as the other case is similar.
Isolating the discrete time derivative in \eqref{eqn:equiv_method_full}, and restricting $v_h$ to $V_h$ yields
\begin{equation}\label{isolate_time_derivative}
\begin{aligned}
&\left(\frac{D[u^{n+1}_{h}]}{\Delta t}, v_{h}\right) 
=-b\left( I[u^{n+1}_{h}],I[u^{n+1}_{h}],  v_{h}\right)\\ 
&-\nu \left( \nabla I[u^{n+1}_{h}],  \nabla  v_{h}\right) 
+\left(  f^{n+1},  v_{h}\right) \quad \forall v_h \in V_h.
\end{aligned}
\end{equation}
The terms on the right hand side of (\ref{isolate_time_derivative}) can be bounded as follows,
\begin{equation}\label{pressure_estimates}
\begin{aligned}
&b\left( I[u^{n+1}_{h}],I[u^{n+1}_{h}],  v_{h}\right)
\leq 
C\|\nabla I[u^{n+1}_{h}]\| \|\nabla I[u^{n+1}_{h}]\| \|\nabla v_h\|, \\
&-\nu \left( \nabla I[u^{n+1}_{h}],  \nabla  v_{h}\right)
\leq
\nu \|\nabla I[u^{n+1}_{h}]\| \|\nabla v_h\|,\\
&\left(  f^{n+1},  v_{h}\right) 
\leq 
\|f^{n+1}\|_{-1} \|\nabla v_h\|.
\end{aligned}
\end{equation}
In equation (\ref{isolate_time_derivative}), we can use the above estimates in (\ref{pressure_estimates}), divide both sides by $\|\nabla v_h\|$, and take the supremum over $v_h \in V_h$. This gives
\begin{equation}
\begin{aligned}
&\bigg\|\dfrac{D[u^{n+1}_{h}]}{\Delta t}\bigg\|_{V^*_h} 
\leq
(C\|\nabla I[u^{n+1}_{h}]\|+ \nu)\|\nabla I[u^{n+1}_{h}]\| 
+\|f^{n+1}\|_{-1}.
\end{aligned}
\end{equation}
Lemma \ref{dual_inequality} implies 
\begin{equation}
\begin{aligned}
&\bigg\|\dfrac{D[u^{n+1}_{h}]}{\Delta t}\bigg\|_{X^*_h} 
\leq
C\Big[(\|\nabla I[u^{n+1}_{h}]\|+ 1)\|\nabla I[u^{n+1}_{h}]\| 
+\|f^{n+1}\|_{-1}\Big].
\end{aligned}
\end{equation}
Now consider Algorithm \ref{alg:equiv_discrete} again with $v_h \in X_h$. Isolating the pressure term in (\ref{eqn:equiv_method_full}) and using the estimates from (\ref{pressure_estimates}) yields
\begin{gather}\label{before_final_pressure_stability}
\left( p^{n+1}_{h}, \nabla\cdot  v_{h}\right)
\leq
\left(\frac{D[u^{n+1}_{h}]}{\Delta t}, v_{h}\right)\\  
+C(\|\nabla I[u^{n+1}_{h}]\|+ 1)\|\nabla I[u^{n+1}_{h}]\| \|\nabla v_h\|
+\|f^{n+1}\|_{-1} \|\nabla v_h\|.\notag
\end{gather}
Divide both sides by $\|\nabla v_h\|$, take supremum over $v_h \in X_h$ and use the discrete inf-sup condition and the results in (\ref{before_final_pressure_stability}). Then,
\begin{equation}
\begin{aligned}
&\beta\|p^{n+1}_{h}\| 
\leq C\Big[(\|\nabla I[u^{n+1}_{h}]\|+ 1)\|\nabla I[u^{n+1}_{h}]\| 
+\|f^{n+1}\|_{-1}\Big].
\end{aligned}
\end{equation}
We then multiply by $\Delta t$, sum from $n=1$ to $n=N-1$, and apply Cauchy-Schwartz on the right hand,
\begin{equation}
\begin{aligned}
&\beta \Delta t \sum_{n=1}^{N-1} \|p^{n+1}_{h}\| 
\leq C\Delta t\Big[(\||\nabla I[u^{n+1}_{h}]\||_{2,0}+  1)\||\nabla I[u^{n+1}_{h}]\||_{2,0} 
+\|f^{n+1}\|_{2,-1}\Big].
\end{aligned}
\end{equation}
Then using the result from velocity approximation, we get,
\begin{equation}
\begin{aligned}
&\beta \Delta t \sum_{n=1}^{N-1} \|p^{n+1}_{h}\| 
\leq C\Big[( \||f\||_{2,-1}+1)\||f\||_{2,-1} 
+( \mathcal{E}^1 +1)\mathcal{E}^1 \Big].
\end{aligned}
\end{equation}
\end{proof}

\subsection{Error estimates for the pressure\label{sec:pressure_error_analysis}}

\label{sec:pressure_error} We now prove convergence of the pressure
approximation in $l^{1}(0,T;L^{2}(\Omega ))$. Denote the pressure error as $%
e_{p}^{n}=p(t^{n})-p_{h}^{n}$. 
\begin{theorem} \label{pressure_error}
Let $u$, $p$ satisfy the equation $(\ref{velocity_error})$. 
Let the assumption of regularity in Theorem $\ref{convergency}$ be satisfied. Then there exists a constant $C>0$ such that
\begin{equation}
\begin{aligned}
&\Delta t \beta \sum_{n=1}^{N-1} \|e_{p}^{n+1}\| 
\leq C\Big(h^{k}+h^{s+1}+\Delta t ^2\Big) &\text{for Option A},\\
&\Delta t \beta \sum_{n=1}^{N-1} \|I[e_{p}^{n+1}]\| 
\leq C\Big(h^{k}+h^{s+1}+\Delta t ^2\Big) &\text{for Option B}.
\end{aligned}
\end{equation}
\end{theorem}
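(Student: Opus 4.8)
The plan is to reuse the pressure-recovery technique of Fiordilino already applied for the stability estimate in Theorem~\ref{thm_pressure_stability}, but now on the error equation rather than on the scheme itself. First I would subtract the discrete momentum equation \eqref{eqn:equiv_method_full} (Option~A) from the consistency form \eqref{eqn:true_with_consistency} of the continuous problem. With $\mathbf{e}_{u}^{n}=u(t^{n})-u_{h}^{n}$ and $e_{p}^{n}=p(t^{n})-p_{h}^{n}$, this produces an error equation holding for all $v_h\in X_h$; isolating the pressure term gives
\[
(e_p^{n+1},\nabla\cdot v_h)=\Big(\tfrac{D[\mathbf{e}_u^{n+1}]}{\Delta t},v_h\Big)+\Lambda^{n+1}(v_h)-\tau^{n+1}(u,p;v_h),
\]
where $\Lambda^{n+1}(v_h)$ collects the viscous term $\nu(\nabla I[\mathbf{e}_u^{n+1}],\nabla v_h)$ and the nonlinear difference $b(I[u(t^{n+1})],I[u(t^{n+1})],v_h)-b(I[u_h^{n+1}],I[u_h^{n+1}],v_h)$.

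Next I would recover $\|e_p^{n+1}\|$ from the discrete inf-sup condition. Splitting $e_p^{n+1}=(p(t^{n+1})-\lambda_h^{n+1})+(\lambda_h^{n+1}-p_h^{n+1})$, with $\lambda_h^{n+1}\in Q_h$ the $L^2$-projection of $p(t^{n+1})$, the interpolation part is $O(h^{s+1})$ by the approximation assumptions, while the discrete part $\lambda_h^{n+1}-p_h^{n+1}\in Q_h$ is controlled by $\beta\|\lambda_h^{n+1}-p_h^{n+1}\|\le \sup_{v_h\in X_h}(\lambda_h^{n+1}-p_h^{n+1},\nabla\cdot v_h)/\|\nabla v_h\|$ together with the isolated expression above. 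The viscous term is bounded by $\nu\|\nabla I[\mathbf{e}_u^{n+1}]\|\,\|\nabla v_h\|$; the nonlinear difference is split as $b(I[\mathbf{e}_u^{n+1}],I[u(t^{n+1})],v_h)+b(I[u_h^{n+1}],I[\mathbf{e}_u^{n+1}],v_h)$ and bounded through Lemma~\ref{nonlinear_inequality} by $C\big(\|\nabla I[u(t^{n+1})]\|+\|\nabla I[u_h^{n+1}]\|\big)\|\nabla I[\mathbf{e}_u^{n+1}]\|\,\|\nabla v_h\|$; and $\tau^{n+1}$ is bounded by Lemma~\ref{consis_lemma}, each contribution being $O(\Delta t^2)$.

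The main obstacle is the discrete time-derivative term $(\tfrac{D[\mathbf{e}_u^{n+1}]}{\Delta t},v_h)$: a direct Cauchy--Schwarz bound would require controlling $\Delta t\sum_n\|D[\mathbf{e}_u^{n+1}]\|/\Delta t=\sum_n\|D[\mathbf{e}_u^{n+1}]\|$, which Theorem~\ref{convergency} does not supply at second order. The remedy mirrors Theorem~\ref{thm_pressure_stability}: bound $\|\tfrac{D[\mathbf{e}_u^{n+1}]}{\Delta t}\|_{V_h^*}$ from the error equation restricted to $v_h\in V_h$, where the discrete pressure drops out and only $\Lambda^{n+1}$, $\tau^{n+1}$, and the pressure interpolation error $(p(t^{n+1})-\lambda_h^{n+1},\nabla\cdot v_h)$ survive, and then pass to $\|\cdot\|_{X_h^*}$ using the norm equivalence in Lemma~\ref{dual_inequality}. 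Since that lemma requires its argument to lie in $V_h$, I would first split $\mathbf{e}_u=\eta_u-\phi_h$ with $\phi_h\in V_h$ a discretely divergence-free approximation error, apply the lemma to $D[\phi_h]/\Delta t\in V_h$, and bound the approximation part $D[\eta_u]/\Delta t$ directly in $L^2$ (it is $O(h^{k+1})$ after summation). This dual-norm step, and verifying that every resulting contribution is second order in $\Delta t$ and optimal in $h$, is the delicate part.

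Finally I would multiply the resulting bound for $\beta\|e_p^{n+1}\|$ by $\Delta t$, sum over $n=1,\dots,N-1$, and apply the discrete Cauchy--Schwarz inequality to convert sums such as $\Delta t\sum_n\|\nabla I[\mathbf{e}_u^{n+1}]\|$ into $T^{1/2}\big(\Delta t\sum_n\|\nabla I[\mathbf{e}_u^{n+1}]\|^2\big)^{1/2}$, and products such as $\Delta t\sum_n\|\nabla I[u_h^{n+1}]\|\,\|\nabla I[\mathbf{e}_u^{n+1}]\|$ into factors controlled by Corollary~\ref{velocity_stability} and the regularity hypotheses; the velocity estimate \eqref{velocity_error} then delivers the claimed $O(h^{k}+h^{s+1}+\Delta t^{2})$. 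For Option~B the identical argument is run with $I[e_p^{n+1}]$ replacing $e_p^{n+1}$ (the scheme \eqref{eqn:equiv_method_full_optb} carries $I[p_h^{n+1}]$), the only new term being the pressure consistency contribution $(I[p(t^{n+1})]-p(t^{n+1}),\nabla\cdot v_h)$ from \eqref{eqn:consistency_defnb}, which Lemma~\ref{consis_lemma} bounds by $O(\Delta t^2)$ under the extra hypothesis $p_{tt}\in L^2(0,T;L^2(\Omega))$.
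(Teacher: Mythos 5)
Your proposal follows essentially the same route as the paper's proof: the same error equation obtained by subtracting \eqref{eqn:equiv_method_full} from \eqref{eqn:true_with_consistency}, the same splitting $e_u=\eta+\phi_h$ with $\phi_h\in V_h$ so that the discrete time-derivative can be bounded in $\|\cdot\|_{V_h^*}$ on the pressure-free equation and transferred to $\|\cdot\|_{X_h^*}$ via Lemma~\ref{dual_inequality} (the Fiordilino technique), the same inf-sup recovery of $\beta\|p_h^{n+1}-\lambda_h^{n+1}\|$ followed by summation, Cauchy--Schwarz, Corollary~\ref{velocity_stability}, and the velocity estimate \eqref{velocity_error}, and the same treatment of Option~B through the extra consistency term in \eqref{eqn:consistency_defnb}. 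You even correctly identify the one genuinely delicate point --- that a direct bound on $\sum_n\|D[e_u^{n+1}]\|$ is unavailable at second order, forcing the dual-norm detour --- so the plan is sound and matches the paper's argument.
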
%
\begin{proof}Again, we only prove this for Option A since the other case requires only slight modification.
Using the equations (\ref{subtract_first_result}) and (\ref{deco_velo_error}) yields
\begin{equation}\label{pressure_subtract}
\begin{aligned}
&\left( \frac{D[\phi(t^{n+1})]}{\Delta t}, v_{h}\right) 
=\left( \frac{D[\eta(t^{n+1})]}{\Delta t}, v_{h}\right)
-b\left( I[{e}^{n+1}_{u}],I[u(t^{n+1})], v_{h}\right)  \\
&-b\left( I[{u}_h^{n+1}],I[{e}^{n+1}_{u}], v_{h}\right)  
-\nu\left( \nabla I[{e}^{n+1}_{u}], \nabla  v_{h}\right) \\
&+\left( {p}(t^{n+1})-{\lambda}^{n+1}_h, \nabla \cdot v_h \right)
+\tau^{n+1}(u,p;v_h) \quad \forall  v_{h}  \in {V}_{h}.\\
\end{aligned}
\end{equation}
We bound the six individual terms on the right hand side of (\ref{pressure_subtract}), term by term as follows:
\begin{equation}\label{p_err_1st}
\begin{aligned}
&\left( \frac{D[\eta(t^{n+1})]}{\Delta t}, v_{h}\right)
\leq C \Delta t^{-\frac{1}{2}} \|\eta_t\|_{L^2(t^{n-1},t^{n+1};L^2(\Omega))}\|\nabla v_h\|,\\
\end{aligned}
\end{equation}
\begin{equation}
\begin{aligned}
&-b\left( I[{e}^{n+1}_{u}],I[u(t^{n+1})], v_{h}\right) 
\leq C \| \nabla I[{e}^{n+1}_{u}]\| \|\nabla I[u(t^{n+1})]\| \|\nabla v_h\|,\\
\end{aligned}
\end{equation}
\begin{equation}
\begin{aligned}
&-b\left( I[{u}_h^{n+1}],I[{e}^{n+1}_{u}], v_{h}\right) 
 \leq C \|\nabla (I[{u}_h^{n+1}])\| \|\nabla I[{e}^{n+1}_{u}]\| \|\nabla  v_{h}\|,\\
\end{aligned}
\end{equation}
\begin{equation}
\begin{aligned}
&-\nu\left( \nabla I[{e}^{n+1}_{u}], \nabla  v_{h}\right)
\leq \nu \|\nabla I[{e}^{n+1}_{u}]\| \|\nabla  v_{h}\|,\\
\end{aligned}
\end{equation}
\begin{equation}
\begin{aligned}
&\left( {p}(t^{n+1})-{\lambda}^{n+1}_h, \nabla \cdot v_h \right)
 \leq C \|{p}(t^{n+1})-{\lambda}^{n+1}_h\| \|\nabla v_h\|,\\
\end{aligned}
\end{equation}
\begin{equation}\label{p_err_last}
\begin{aligned}
&\tau^{n+1}(u,p;v_h) 
 \leq C \Delta t^{\frac{3}{2}} \Big( \|u_{ttt}\|_{L^2(t^{n-1},t^{n+1};L^2(\Omega))} 
+ \|\nabla u_{tt}\|_{L^2(t^{n-1},t^{n+1};L^2(\Omega))}\\
&
+\|\nabla u\|^2_{L^4(t^{n-1},t^{n+1};L^2(\Omega))} + \| \nabla u_{tt}\|^2_{L^4(t^{n-1},t^{n+1};L^2(\Omega))}\Big)\|\nabla v_h\|.
\end{aligned}
\end{equation}
Considering equation ($\ref{pressure_subtract}$) and Lemma $\ref{dual_inequality}$ , using equations (\ref{p_err_1st})-(\ref{p_err_last}), dividing both sides by $\|\nabla v_h\|$ and taking a supremum over $V_h$ gives
\begin{equation} \label{rhd_estimates}
\begin{aligned}
&\bigg\|\frac{D[\phi(t^{n+1})]}{\Delta t}\bigg\|_{X_h^*}
\leq C \Big[ \Delta t^{-\frac{1}{2}} \|\eta_t\|_{L^2(t^{n},t^{n+1};L^2(\Omega))}\| \\
&+  \| \nabla I[{e}^{n+1}_{u}]\| (\|\nabla I[u(t^{n+1})]\|
+\|\nabla (I[{u}_h^{n+1}])\|
+1)\\
&+\|{p}(t^{n+1})-{\lambda}^{n+1}_h\|
+\Delta t^{\frac{3}{2}} \Big( \|u_{ttt}\|_{L^2(t^{n-1},t^{n+1};L^2(\Omega))} 
+ \|\nabla u_{tt}\|_{L^2(t^{n-1},t^{n+1};L^2(\Omega))}
\\
&+\|\nabla u\|^2_{L^4(t^{n-1},t^{n+1};L^2(\Omega))} + \| \nabla u_{tt}\|^2_{L^4(t^{n-1},t^{n+1};L^2(\Omega))}\Big)\Big].
\end{aligned}
\end{equation}
Separating the pressure error term $e_p^{n+1}=(p(t^{n+1})-\lambda^{n+1}_h) - (p^{n+1}_h-\lambda^{n+1}_h)$ and rearranging implies
\begin{gather*}
\Big(p_h^{n+1}-{\lambda}^{n+1}_h, \nabla \cdot v_h \Big) 
=\left( \frac{D[\eta(t^{n+1})]}{\Delta t}, v_{h}\right)
-\left( \frac{D[\phi(t^{n+1})]}{\Delta t}, v_{h}\right) \\
+\nu\left( \nabla I[{e}^{n+1}_{u}], \nabla  v_{h}\right) 
-\left( e_{p}^{n+1}, \nabla \cdot v_h \right)
-\left( {p}(t^{n+1})-{\lambda}^{n+1}_h, v_{h}\right) +\tau^{n+1}(u,p;v_h)   \quad \forall  v_{h}  \in {X}_{h}.
\end{gather*}
Consider the estimates in (\ref{p_err_1st})-(\ref{rhd_estimates}). Divide by $\|\nabla v_h\|$, take supremum over $v_h \in X_h$ and use discrete inf-sup condition to obtain,
\begin{equation} \label{no_square}
\begin{aligned}
&\beta\|p_h^{n+1}-{\lambda}^{n+1}_h\|
\leq  C \Big[ \Delta t^{-\frac{1}{2}} \|\eta_t\|_{L^2(t^n,t^{n+1};L^2(\Omega))} \\
&+  \| \nabla I[{e}^{n+1}_{u}]\| \Big(\|\nabla I[u(t^{n+1})]\|
+\|\nabla (I[{u}_h^{n+1}])\|
+1\Big)\\
&+\|{p}(t^{n+1})-{\lambda}^{n+1}_h\|
+\Delta t^{\frac{3}{2}} \Big( \|u_{ttt}\|_{L^2(t^{n-1},t^{n+1};L^2(\Omega))} + \|\nabla u_{tt}\|_{L^2(t^{n-1},t^{n+1};L^2(\Omega))}
\\
&+\|\nabla u\|^2_{L^4(t^{n-1},t^{n+1};L^2(\Omega))} + \| \nabla u_{tt}\|^2_{L^4(t^{n-1},t^{n+1};L^2(\Omega))}\Big)\Big].
\end{aligned}
\end{equation}
We multiply by $\Delta t$, sum from $n=1$ to $n=N-1$ and apply triangle inequality. This yields 
\begin{equation}
\begin{aligned}
&\beta\Delta t \sum_{n=1}^{N-1}\|e_{p}^{n+1}\|
\leq  C \Big[ \Delta t^{-\frac{1}{2}} \|\eta_t\|_{L^2(0,T;L^2(\Omega))}\\
&+ \||p(t^{n+1})-\lambda^{n+1}_h\||_{2,0}
+  \|| \nabla I[{e}^{n+1}_{u}]\||_{2,0} \\
&+ \Delta t ^{\frac{5}{2}} \Big( \| u_{ttt}\|_{2,0}
+ \|\nabla u_{tt}\|_{2,0}
+\||\nabla u \||^2_{4,0} + \| \nabla u_{tt}\|^2_{4,0}\Big)\Big].
\end{aligned}
\end{equation}
Results from the equations $(\ref{bound_after_grwonwall_second_term})$ and $(\ref{bound_after_grwonwall_6th_term})$ give the bounds for the first two terms. Using error estimates of the velocity on the third term and taking infimum over $X_h$ and $Q_h$ yield the result.
\end{proof}

\section{Numerical tests}

\label{sec:tests} We verify second order convergence for the new method
through an exact solution in Section \ref{sec:taylor}. Visualizations of the
flow and benchmark quantities gives additional
support to the increased accuracy of the new method in Section \ref%
{sec:cylinder}. The tests used $P_{2}/P_{1}$ and $P_{3}/P_{2}$ elements. All
computations were performed with FEniCS \cite{fenics}.

\subsection{Taylor-Green vortex \label{sec:taylor}}

We apply the backward Euler and the backward Euler plus filter for the 2D
Taylor-Green vortex. This test problem is historically used to assess accuracy
and convergence rates in CFD \cite{chorin1967}. The exact solution is given
by 
\begin{equation*}
u=e^{-2\nu t}(\cos x\sin y,-\sin x\cos y)\text{ and }p=-\frac{1}{4}e^{-4\nu
t}(\cos 2x+\cos 2y).
\end{equation*}%
To test time accuracy, we solve using $P_{3}/P_{2}$ elements on a uniform
mesh of $250\times 250$ squares divided into 2 triangle per square. We take
a series of time steps for which the total error is expected to be
dominated by the temporal error. Since the true solution decays
exponentially, we tabulate and display relative errors. Fig. 6.1 displays
the relative errors for backward Euler, backward Euler plus filtering only
the velocity (Algorithm \ref{alg:themethod}A), and backward Euler plus filtering both the velocity and
pressure (Algorithm \ref{alg:themethod}B). Filtering the pressure does not affect the velocity solution, so
the velocity error plot only shows two lines. The velocity error is $%
\mathcal{O}(\Delta t^{2})$, as predicted, and significantly smaller than the
backward Euler error. Thus, adding the filter step (1.3) reduces the
velocity error substantially, Figure 8.1, at negligible cost, Figure 1.1.
The pressure error is $\mathcal{O}(\Delta t^{2})$ when either both $u$ and $p
$ are filtered, or only $u$ is filtered, which is consistent with our
theoretical analysis. Filtering only $u$ has smaller pressure error since
the pressure filter introduces an extra consistency error term, see \eqref{eqn:consistency_defnb}.

\begin{figure}[tbp]
\centering
\begin{subfigure}{0.49\linewidth}
   \centering
   \includegraphics[width = 1\linewidth]{velocityError.pdf}
\end{subfigure}
\begin{subfigure}{0.49\linewidth}
   \centering
   \includegraphics[width = 1\linewidth]{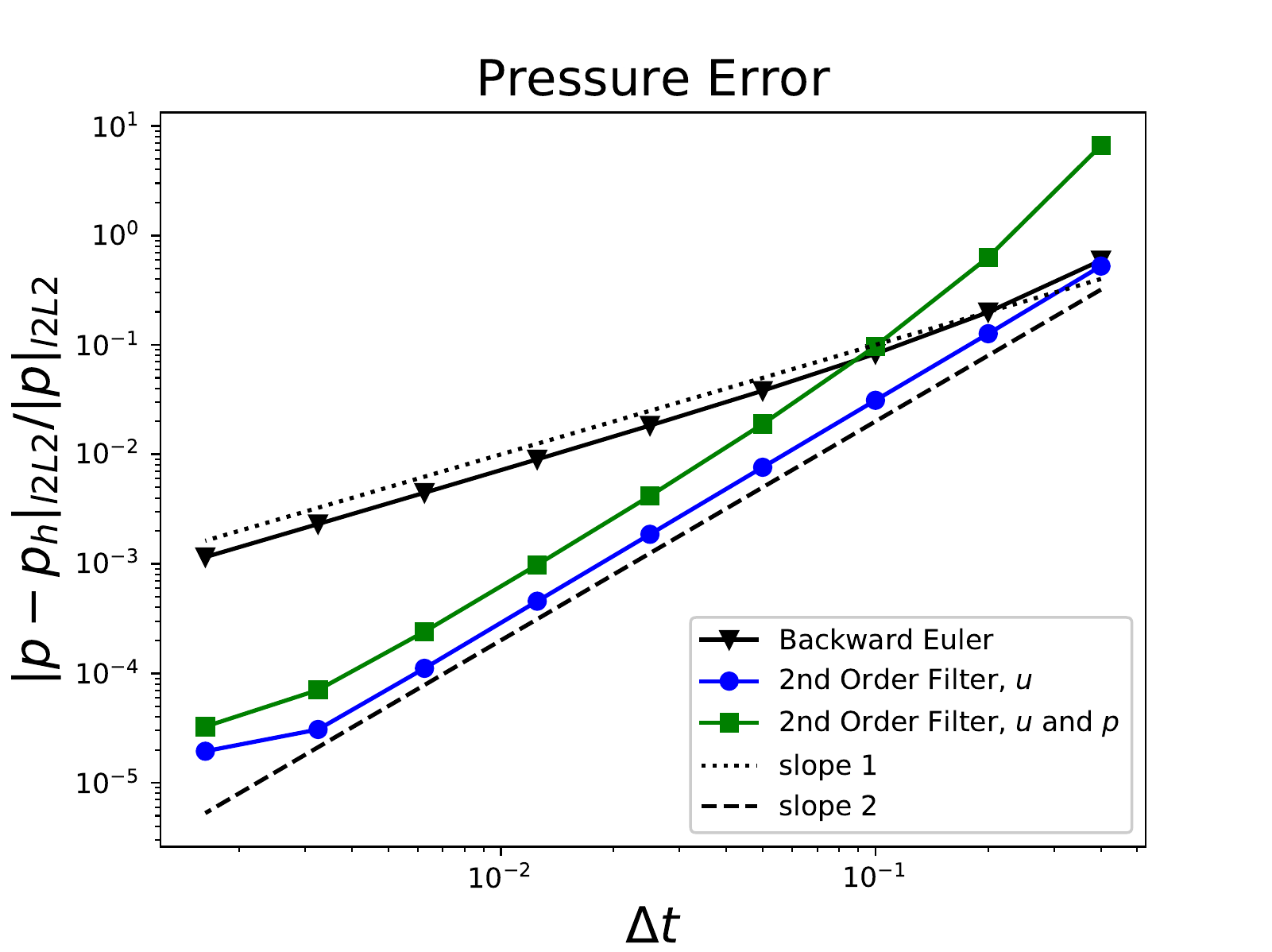}
\end{subfigure}
\caption{Convergence rates for the filtered quantities are second order as
predicted. Filtering only the velocity produces the best pressure.\label{fig:convergence}}
\end{figure}
\subsection{Adaptive Test\label{sec:test_adaptive}}
We test the time/order adaptive algorithm on a problem that showcases the superiority of the VSVO method over the constant stepsize, constant order method.

The Taylor-Green problem can be modified by replacing $F$ with any differentiable function of $t$. With velocity and pressure defined as before, the required body force is
\begin{equation}\notag
f(x,y,t) = (2 \nu F(t) + F'(t))\langle \cos x \sin y, -\cos y \sin x \rangle.
\end{equation}
For $F(t)$, we construct a sharp transition function between 0 and 1. First, let 
\begin{equation}\notag
    g(t) =
    \begin{cases}
      0        & \text{if } t \leq 0\\
      \exp\left(-\frac{1}{(10t)^{10}}\right) & \text{if }  t> 0 \\
    \end{cases}
  \end{equation}
This is a differentiable function, and $g(5) \equiv 1$ in double precision. Therefore, a differentiable (up to machine precision) function can be constructed with shifts and reflections of this function. This creates sections of flatness, and sections that rapidly change which require adaptivity to resolve efficiently. See Fig. \ref{fig:cutoff} for the evolution of $\|u\|$ with time. All tests were initialized at rest spaced at a constant interval of $k = 0.1$, 100 nodes per side of the square using $P_2/P_1$ elements, and with final time of 45.

Figure \ref{fig:cutoff} compares two numerical solutions. One is from Algorithm \ref{alg:themethod} (second order - nonadaptive), and the other is from Algorithm \ref{alg:vsvo_themethod} (VSVO-12). With $TOL=10^{-3}$, the VSVO-12 method takes 342 steps, which comprises 254 accepted steps, and 88 rejected steps. The constant stepsize method which took 535 steps does not accurately capture the energetic jumps.

Figure \ref{fig:work} shows the relative $l^2L^2$ velocity errors versus steps taken of VSVO-12 for seven different $TOL$s, starting at $10^{-1}$, and dividing by ten down to $10^{-7}$. This is compared with nonadaptive method (which has no rejected steps) sampled at several stepsizes. Both methods show second order convergence, but for smaller tolerances, VSVO-12 performs about $10^3$ better than the nonadaptive method for the same amount of work. 
\begin{figure}[!ht]

   \includegraphics[width = 1\linewidth]{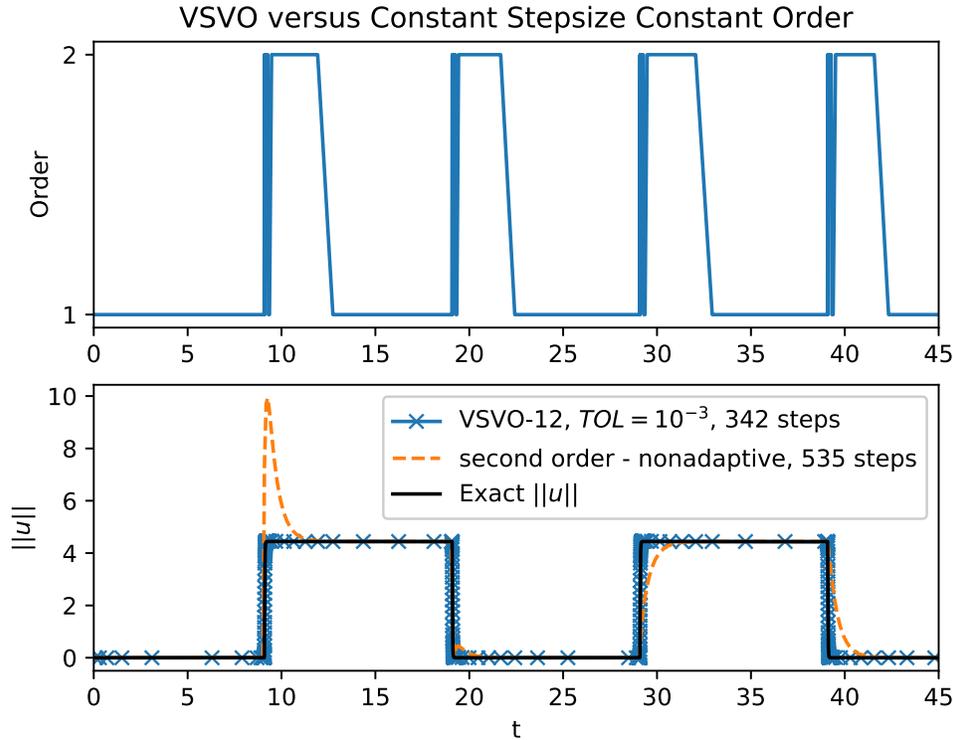}

\caption{The nonadaptive second order method results in large overshoots and undershoots while requiring more work than the adaptive method.\label{fig:cutoff}}
\end{figure}

\begin{figure}[!ht]

   \includegraphics[width = 1\linewidth]{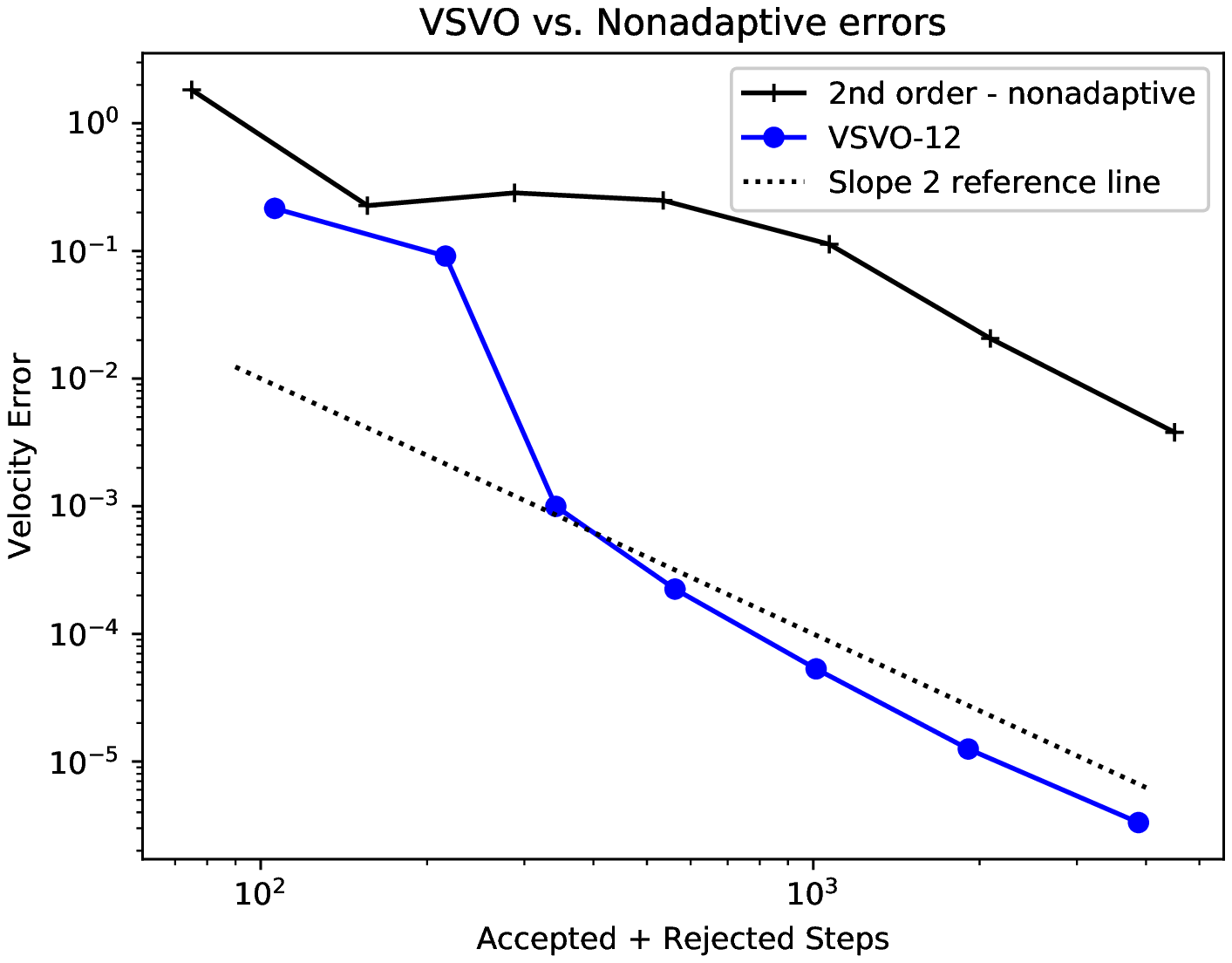}

\caption{The VSVO-12 method performs three orders of magnitude better for the same amount of work compared to the nonadaptive 2nd order method for the test problem in Section \ref{sec:test_adaptive}. Each circle represents a different tolerance from $TOL=10^{-1}$ to $10^{-7}$.\label{fig:work}}
\end{figure}

\subsection{Flow around a cylinder\label{sec:cylinder}}

We now use the benchmark problem of flow around a cylinder, originally
proposed in \cite{schaefer}, to test the improvement obtained using filters
on flow quantities (drag, lift, and pressure drop) using values obtained via
a DNS in \cite{FLD:FLD679} as a reference. This problem has also been used as a
benchmark in \cite{Rang2005},\cite{LIU20103428},\cite{besier2012},\cite%
{CHARNYI2017289} and others. Let $\nu = 10^{-3}$, $f \equiv 0$, $T_{final}=8$%
, and 
\begin{equation}
\Omega = \{(x,y) \,|\, 0 < x < 2.2, \hspace{2mm} 0 < y < 0.41\text{ and }
(x-0.2)^2 + (y - 0.2)^2 > 0.05^2\},  \notag
\end{equation}
i.e., a channel with a cylindrical cutout. A parabolic velocity of $%
u=0.41^{-2}\sin(\pi t/8)(6y(0.41-y),0)$ is prescribed at the left and right
boundaries. We used a spatial discretization with 479026 degrees of freedom
with 1000 vertices on the boundary of the cylinder. The mesh used $P2/P1$
elements, and was obtained by adaptive refinement from solving the steady
solution with $u=0.41^{-2}(6y(0.41-y),0)$ as inflow and outflow boundary conditions.

The correct behavior for this problem is that vortices shed off the cylinder
as the inlet and outlet velocities increase. Fig. \ref{fig:conv-by-pic}
shows snapshots of the flow at $t=6$ for five successively halved $\Delta t$%
's. The Backward Euler approximation shows no vortex shedding for $\Delta
t=0.04,0.02$, and $0.01$. The filtered method of Algorithm \ref{alg:themethod}
shows the qualitatively correct behavior from $\Delta t=0.02$ on. Clearly,
higher order and less dissipative methods are necessary to see dynamics for
modestly large $\Delta t$.

It was demonstrated in \cite{FLD:FLD679} that the backward Euler time
discretization greatly under predicts lift except for very small step sizes.
Fig. \ref{fig:lift_drag} demonstrates that the time filter in Algorithm \ref%
{alg:themethod} corrects both the amplitude and phase error in the backward
Euler approximation. Other quantities that were compared to reference values
were the maximum drag $c_{d,\max}$, the time of max drag $t(c_{d,\max})$,
time of maximum lift $t(c_{l,\max})$, and pressure drop across the cylinder
at $t=8$ are shown in Table \ref{tab:cutoff}.

The choice of whether or not to filter the pressure does not affect the
velocity solution, the snapshots shown Figure \ref{fig:conv-by-pic} are the
same for both choices. Table \ref{tab:cutoff} shows that filtering $u$
greatly improves the calculated flow quantities whether or not $p$ is
filtered.

\begin{figure}[!ht]
\centering
\begin{subfigure}{1\linewidth}
   \centering
   \includegraphics[width = 1\linewidth]{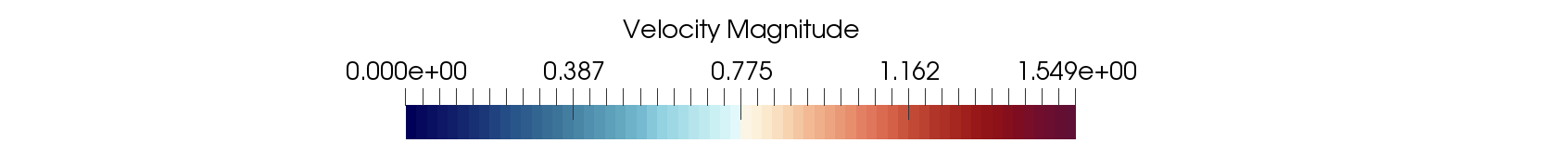}
\end{subfigure}
\par
\begin{subfigure}{0.49\linewidth}
   \centering
   \includegraphics[width = 1\linewidth]{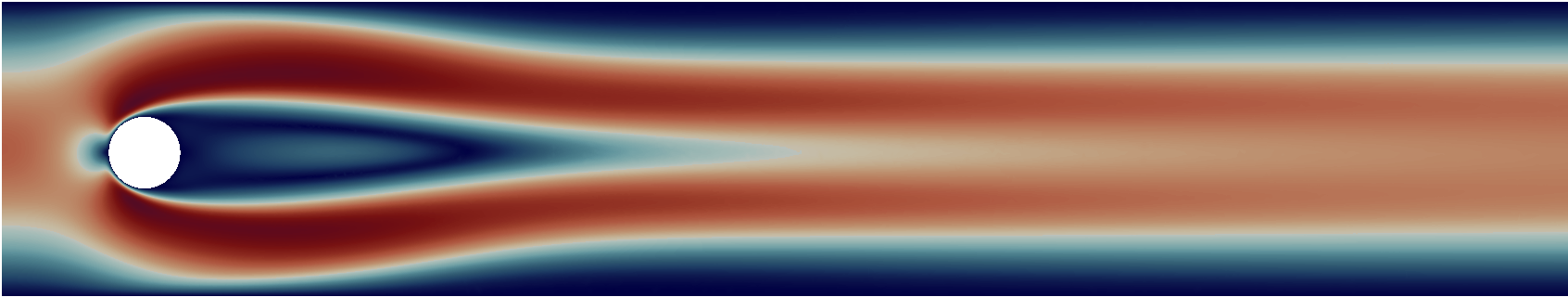}
\end{subfigure}
\begin{subfigure}{0.49\linewidth}
   \centering
   \includegraphics[width = 1\linewidth]{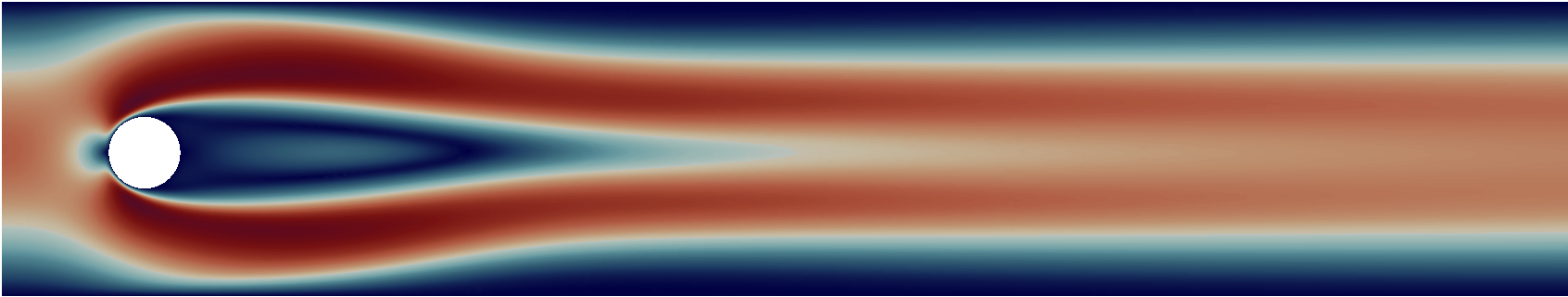}
\end{subfigure}
\begin{subfigure}{0.49\linewidth}
   \centering
   \includegraphics[width = 1\linewidth]{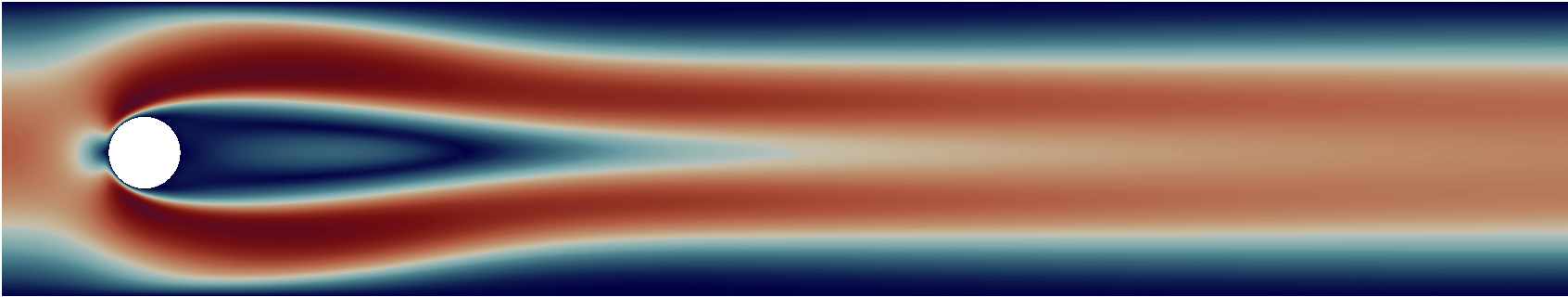}

\end{subfigure}
\begin{subfigure}{0.49\linewidth}
   \centering
   \includegraphics[width = 1\linewidth]{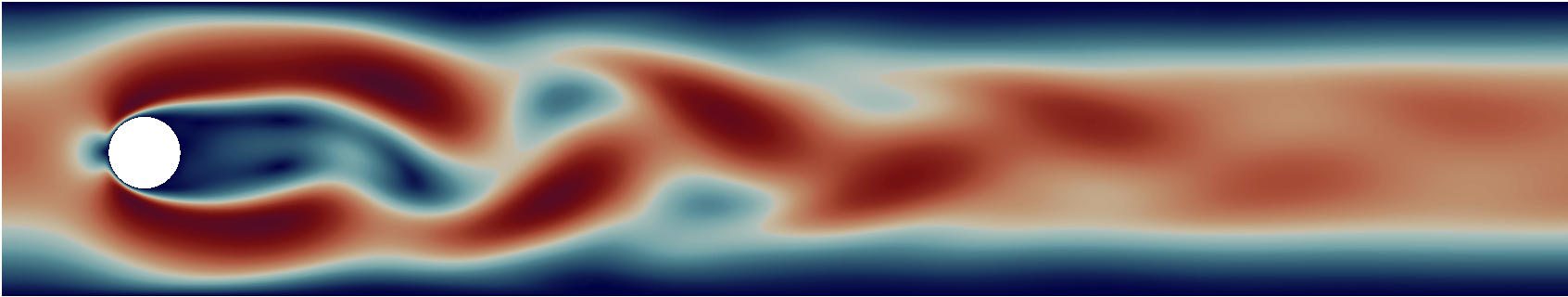}

\end{subfigure}
\begin{subfigure}{0.49\linewidth}
   \centering
   \includegraphics[width = 1\linewidth]{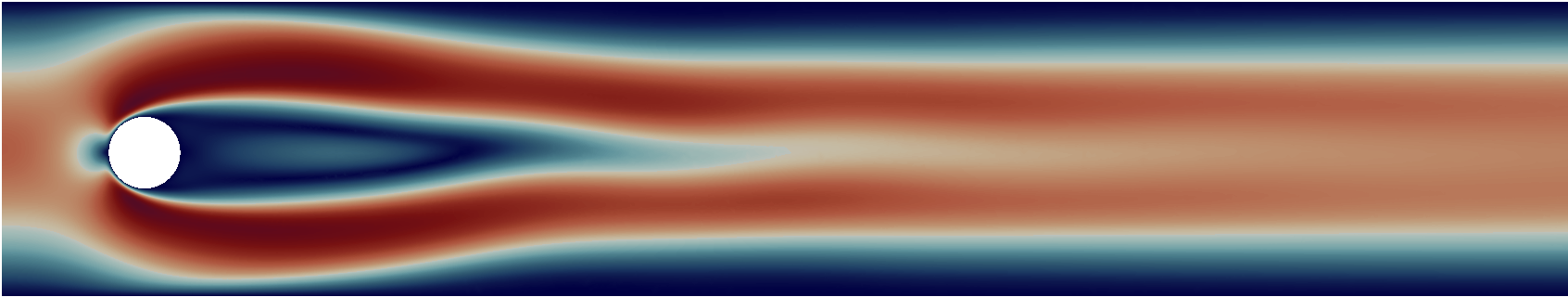}

\end{subfigure}
\begin{subfigure}{0.49\linewidth}
   \centering
   \includegraphics[width = 1\linewidth]{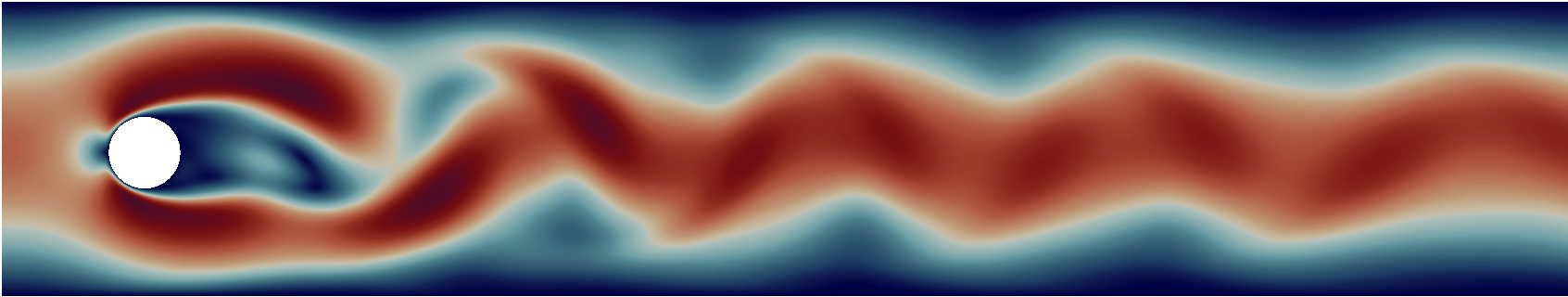}

\end{subfigure}
\begin{subfigure}{0.49\linewidth}
   \centering
   \includegraphics[width = 1\linewidth]{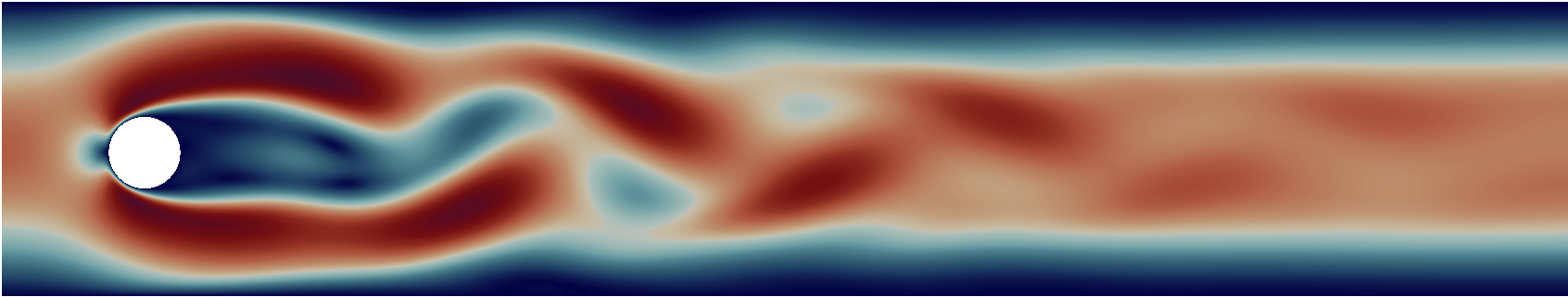}

\end{subfigure}
\begin{subfigure}{0.49\linewidth}
   \centering
   \includegraphics[width = 1\linewidth]{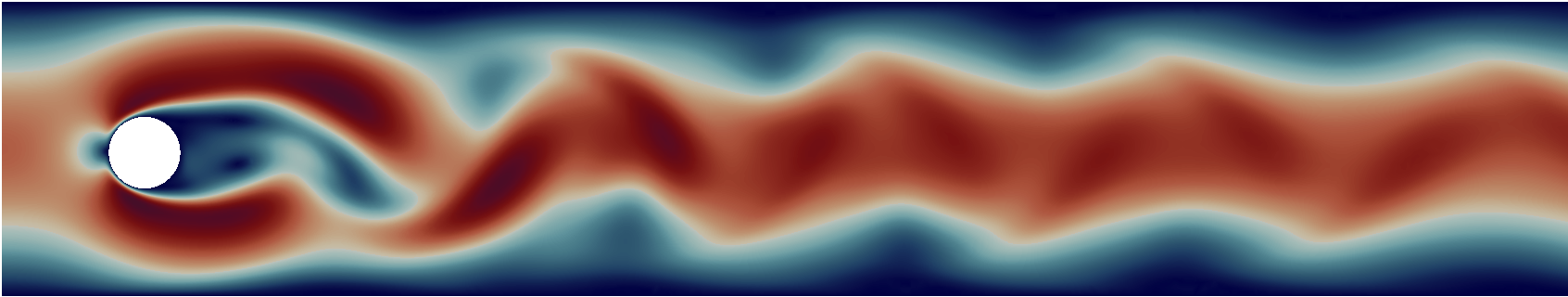}

\end{subfigure}
\begin{subfigure}{0.49\linewidth}
   \centering
   \includegraphics[width = 1\linewidth]{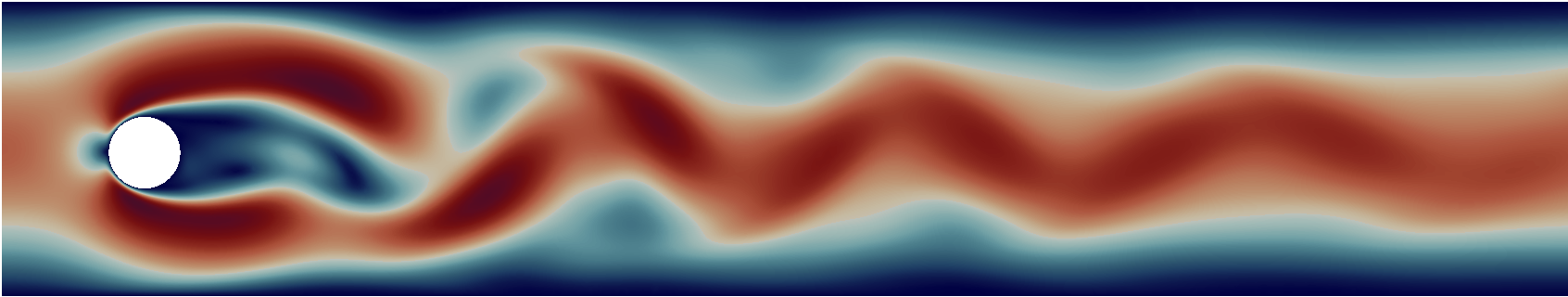}
   \caption{Backward Euler}

\end{subfigure}
\begin{subfigure}{0.49\linewidth}
   \centering
   \includegraphics[width = 1\linewidth]{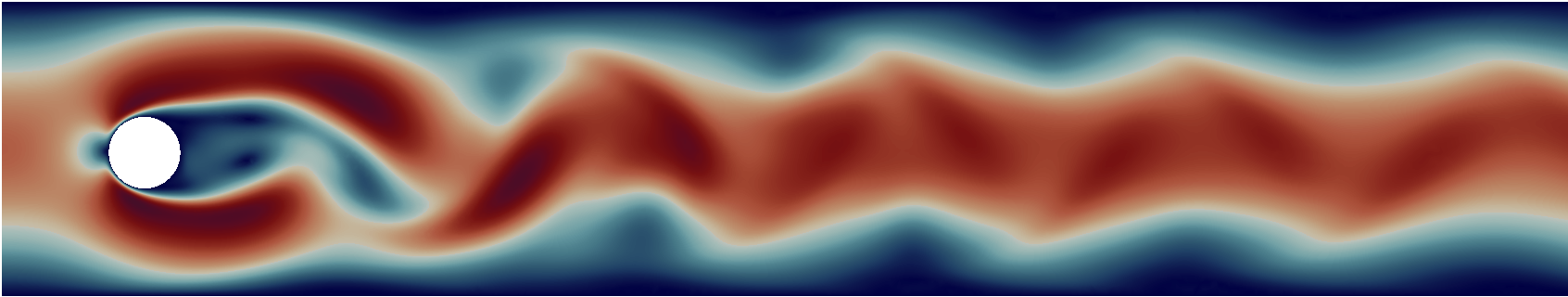}
   \caption{Backward Euler Plus Filter}

\end{subfigure}
\caption{Flow snapshots at $t=6$ with $\Delta t = 0.04$ (top), and $\Delta t$
halving until $\Delta t = 0.0025$ (bottom). Backward Euler (left) destroys
energy and suppresses oscillations, meaning that it can predict nearly
steady state solutions when a time dependent one exists. The time filter
(right) corrects this. }
\label{fig:conv-by-pic}
\end{figure}
\begin{figure}[!ht]
\centering
\begin{subfigure}{0.49\linewidth}
   \centering
   \includegraphics[width = 1\linewidth]{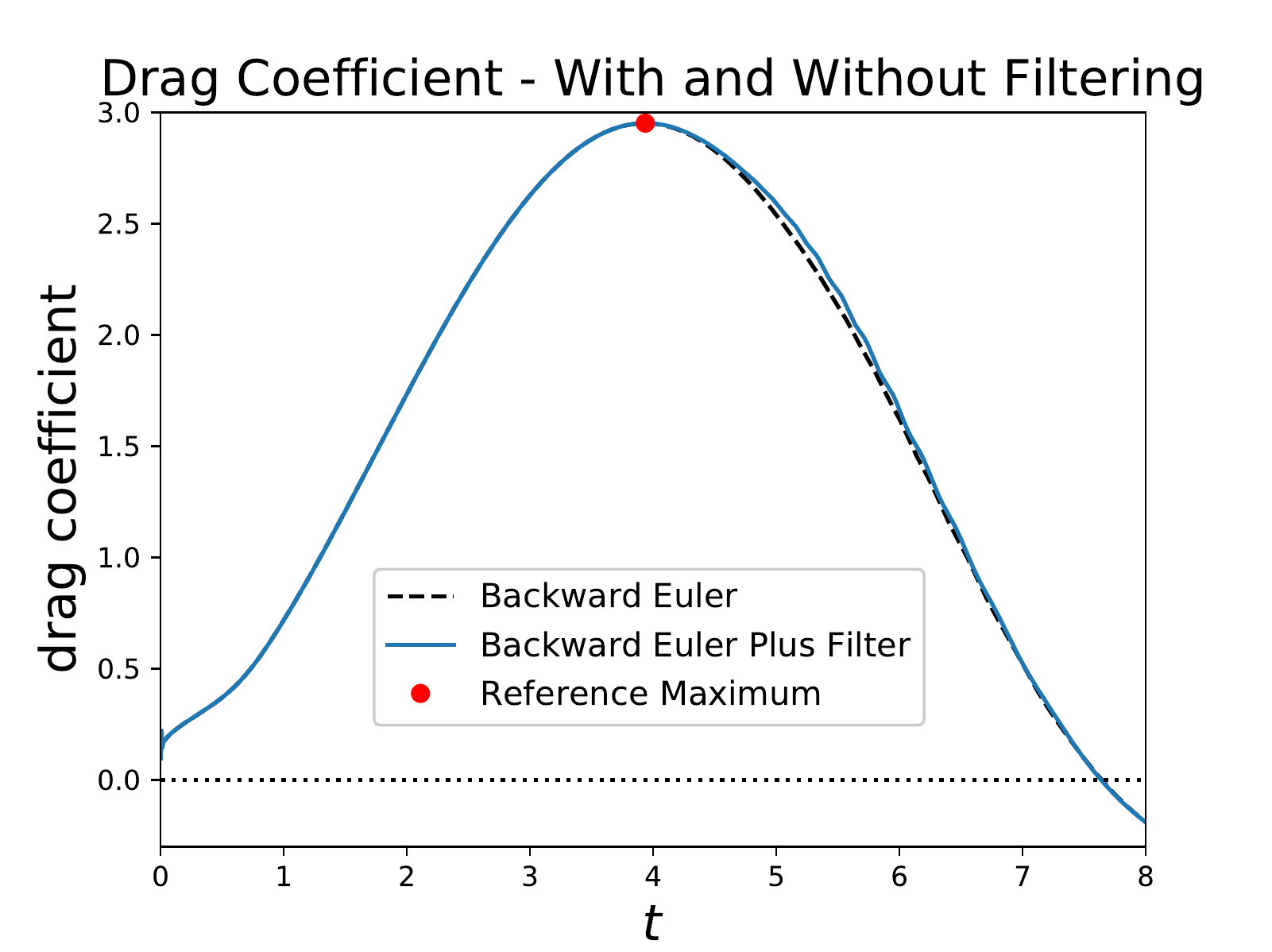}
\end{subfigure}
\begin{subfigure}{0.49\linewidth}
   \centering
   \includegraphics[width = 1\linewidth]{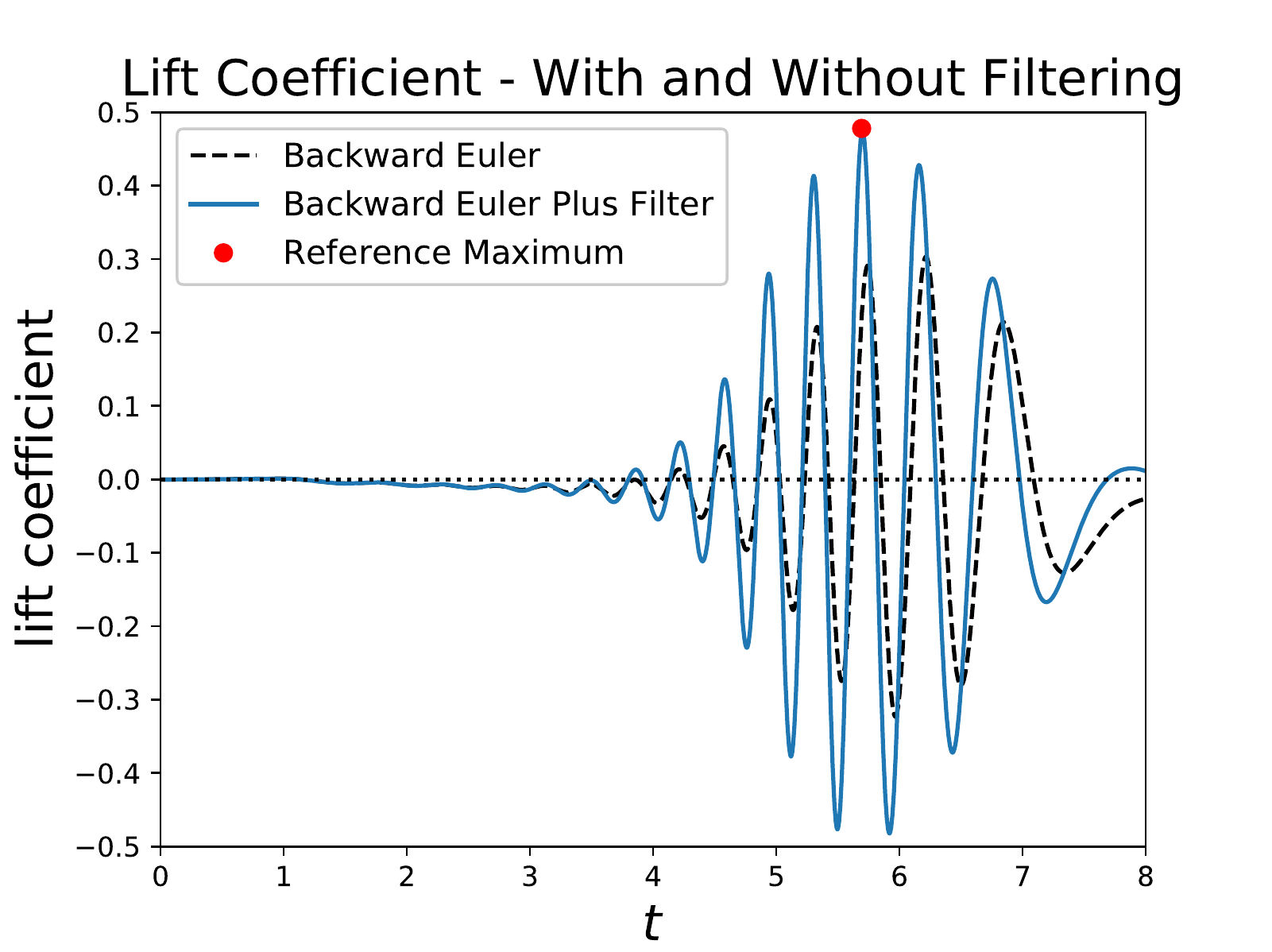}
\end{subfigure}
\par
\begin{subfigure}{0.49\linewidth}
   \centering
   \includegraphics[width = 1\linewidth]{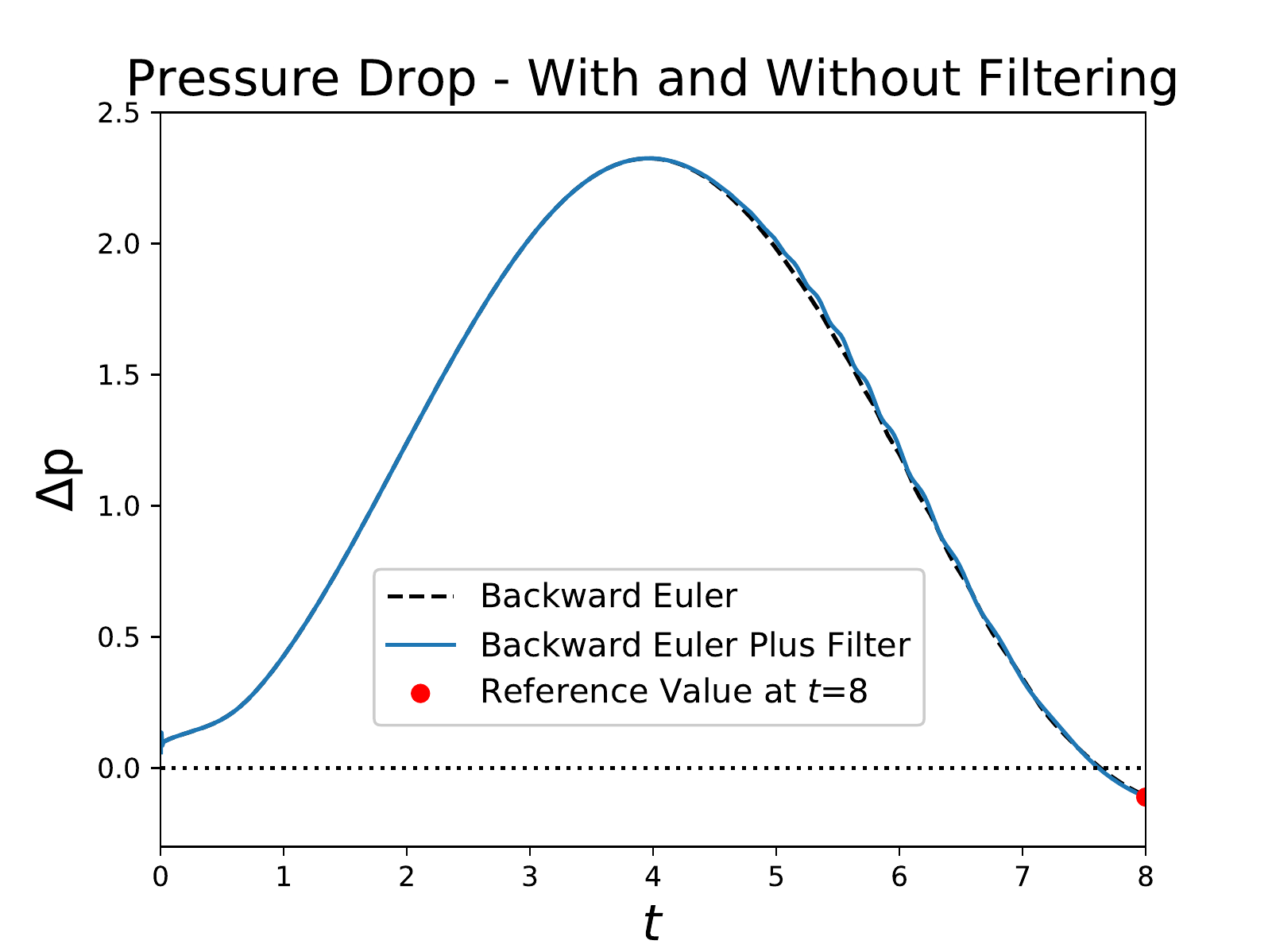}
\end{subfigure}
\caption{Lift of the Backward Euler solution and the filtered solution for $%
\Delta t = 0.0025$. The filtered solution correctly predicts both the time
and magnitude of the maximum lift. }
\label{fig:lift_drag}
\end{figure}

\begin{table}[tbp]
\caption{Lift, drag, and pressure drop for cylinder problem}
\label{tab:cutoff}\centering
Backward Euler\newline
\begin{tabular}{p{1.5cm}p{1.5cm}p{2cm}p{1.5cm}p{2cm}p{2cm}}
\hline
$\Delta t$ & $t(c_{d,\max})$ & $c_{d,\max}$ & $t(c_{l,\max})$ & $c_{l,\max}$
& $\Delta p(8)$ \\ \hline
0.04 & 3.92 & 2.95112558 & 0.88 & 0.00113655 & -0.12675521 \\ 
0.02 & 3.94 & 2.95064522 & 0.92 & 0.00117592 & -0.12647232 \\ 
0.01 & 3.93 & 2.95041574 & 7.17 & 0.02489640 & -0.12433915 \\ 
0.005 & 3.93 & 2.95031983 & 6.28 & 0.17588270 & -0.10051423 \\ 
0.0025 & 3.9325 & 2.95038901 & 6.215 & 0.30323034 & -0.10699361 \\ 
&  &  &  &  & 
\end{tabular}%
\newline
\vspace{3mm} Backward Euler Plus Filter\newline
\begin{tabular}{p{1.5cm}p{1.5cm}p{2cm}p{1.5cm}p{2cm}p{2cm}}
\hline
0.04 & 3.92 & 2.95021463 & 7.56 & 0.00438111 & -0.12628328 \\ 
0.02 & 3.94 & 2.95026781 & 6.14 & 0.20559211 & -0.11146505 \\ 
0.01 & 3.93 & 2.95060684 & 5.81 & 0.40244197 & -0.09943203 \\ 
0.005 & 3.935 & 2.95082513 & 5.72 & 0.46074771 & -0.11111586 \\ 
0.0025 & 3.935 & 2.95089028 & 5.7 & 0.47414096 & -0.11193754 \\ \hline
\end{tabular}
\vspace{3mm}
\par
Backward Euler Plus Filter $u$ and $p$\newline
\begin{tabular}{p{1.5cm}p{1.5cm}p{2cm}p{1.5cm}p{2cm}p{2cm}}
\hline
0.04 & 3.92 & 2.95073993 & 7.52 & 0.00439864 & -0.12642684 \\ 
0.02 & 3.94 & 2.95039973 & 6.14 & 0.21101313 & -0.11153593 \\ 
0.01 & 3.93 & 2.95063962 & 5.81 & 0.40624697 & -0.09945143 \\ 
0.005 & 3.935 & 2.95083296 & 5.72 & 0.46192306 & -0.11112049 \\ 
0.0025 & 3.935 & 2.95089220 & 5.7 & 0.47444753 & -0.11193859 \\ \hline
\end{tabular}
\vspace{3mm}
\par
Reference Values\newline
\begin{tabular}{p{1.5cm}p{1.5cm}p{2cm}p{1.5cm}p{2cm}p{2cm}}
\hline
--- & 3.93625 & 2.950921575 & 5.693125 & 0.47795 & $-$0.1116 \\ \hline
\end{tabular}%
\end{table}
\section{Conclusion}

Accurate and stable time discretization is important for obtaining correct
flow predictions. The backward Euler time discretization is a stable but
inaccurate method. We have shown that for minimum extra programming effort,
computational complexity, and storage, second order accuracy and
unconditional stability can be obtained by adding a time filter. Due to the
embedded and modular structure of the algorithm, both adaptive time-step and
adaptive order are easily implemented in a code based on a backward Euler
time discretization. Extension of the method and analysis to yet higher
order time discretization is important as is exploring the effect of time
filters on other methods possible for Step 1 of Algorithm 1.1. Analysis of
the effect of time filters with moving and time dependent boundary
conditions would also be a significant extension.

\noindent
\textbf{Acknowledgment}

The research herein was partially supported by NSF grants DMS 1522267, 1817542 and CBET 160910.

\bibliographystyle{abbrv}
\bibliography{references}

\appendix
\section{Velocity Error Analysis\label{app:velocity}}

\ 

\subsection{Proof of Lemma \protect\ref{consis_lemma}}

\ 
\begin{proof}
By Taylor's theorem with the integral remainder,
\begin{align*}\notag
D[u(t^{n+1})] - \Delta t u_t(t^{n+1})= \frac{3}{2}u(t^{n+1})- \Delta t u_t(t^{n+1})\\ -2\left(u(t^{n+1})-\Delta t u_t(t^{n+1}) + \frac{\Delta t^2}{2}u_{tt}(t^{n+1})) +\frac{1}{2} \int_{t^{n+1}}^{t^n}u_{ttt}(t)(t^n-t)^2dt \right)\\
+\frac{1}{2}\left(u(t^{n+1})-2\Delta t u_t(t^{n+1}) + 2\Delta t^2u_{tt}(t^{n+1})) +\frac{1}{2} \int_{t^{n+1}}^{t^{n-1}}u_{ttt}(t)(t^{n-1}-t)^2dt \right)\\
=-\int_{t^{n}}^{t^{n+1}}u_{ttt}(t^n-t)^2dt - \frac{1}{4} \int_{t^{n-1}}^{t^{n+1}}u_{ttt}(t^{n-1}-t)^2dt.
\end{align*}
These terms are first estimated by Cauchy-Schwarz.
\begin{equation}\notag
\left(\int_{t^{n}}^{t^{n+1}}u_{ttt}(t)(t^n-t)^2dt\right)^2 \leq \int_{t^{n}}^{t^{n+1}}u_{ttt}^2dt \int_{t^{n}}^{t^{n+1}}(t^n-t)^4dt = \frac{\Delta t^5}{5}\int_{t^{n}}^{t^{n+1}}u_{ttt}^2dt.
\end{equation}
\begin{equation}\notag
\frac{1}{16}\left(\int_{t^{n-1}}^{t^{n+1}}u_{ttt}(t)(t^{n-1}-t)^2dt\right)^2 \leq \frac{1}{16}\int_{t^{n-1}}^{t^{n+1}}u_{ttt}^2dt \int_{t^{n-1}}^{t^{n+1}}(t^{n-1}-t)^4dt = \frac{2\Delta t^5}{5}\int_{t^{n-1}}^{t^{n+1}}u_{ttt}^2dt.
\end{equation}
Thus,
\begin{equation}\notag
\left( {\frac{D[u(t^{n+1})]}{\Delta t}} - u_t(t^{n+1})\right)^2 \leq \frac{6}{5}\Delta t^3\int_{t^{n-1}}^{t^{n+1}}u_{ttt}^2dt.
\end{equation}
Integrating with respect to $x$ yields the first inequality.
Next,
\begin{align*}
I[u(t^{n+1})] - u(t^{n+1}) = \frac{1}{2}u(t^{n+1}) - u(t^{n}) + \frac{1}{2}u(t^{n-1})\\
= \int_{t^n}^{t^{n+1}} u_{tt}(t) (t^{n+1}-t)dt + \int_{t^n}^{t^{n-1}} u_{tt}(t) (t^{n-1}-t)dt.
\end{align*}
By similar steps,
\begin{equation}\notag
\left(\int_{t^{n}}^{t^{n+1}}u_{tt}(t)(t^n-t)dt\right)^2 \leq \frac{\Delta t^3}{3}\int_{t^{n}}^{t^{n+1}}u_{tt}^2dt.
\end{equation}
\begin{equation}\notag
\left(\int_{t^{n-1}}^{t^{n}}u_{tt}(t)(t^{n-1}-t)dt\right)^2 \leq \frac{\Delta t^3}{3}\int_{t^{n-1}}^{t^{n}}u_{tt}^2dt.
\end{equation}
Therefore,
\begin{equation}
\left(I[u(t^{n+1})] - u(t^{n+1})\right)^2 \leq \frac{4}{3}\Delta t^3 \int_{t^{n-1}}^{t^{n+1}} u_{tt}^2 dt.
\end{equation}
The last inequality can be proved using the same strategy.
\end{proof}

\subsection{Proof of Theorem \protect\ref{convergency}\label{sec:velocity_error}}

\ 
\begin{proof}We prove this for Option A. A parallel proof exists for Option B. At $t^{n+1}=(n+1)\Delta t$, the true solution of (\ref{NSE}) satisfies, 
\begin{equation}\label{true_solution}
\begin{aligned}
&\left( \frac{D[u(t^{n+1})]}{\Delta t}, v_{h}\right) 
+b\left( I[u(t^{n+1})],I[u(t^{n+1})], v_{h}\right)  \\
&+\nu\left( \nabla I[u(t^{n+1})], \nabla  v_{h}\right) 
-\left( {p}(t^{n+1}),\nabla \cdot  v_{h}\right) \\
&=\left( \mathbf{f}^{n+1}, v_{h}\right) + \tau^{n+1}(u,p;v_h) \quad \forall  v_{h}  \in {X}_{h}.\\
\end{aligned}
\end{equation}
Subtracting $(\ref{eqn:equiv_method_full})$ from $(\ref{true_solution})$ yields
\begin{equation}\label{subtract_first_result}
\begin{aligned}
&\left( \frac{D[{e}^{n+1}_{u}]}{\Delta t}, v_{h}\right) 
+b\left( I[{e}^{n+1}_{u}],I[u(t^{n+1})], v_{h}\right)  \\
&+b\left( I[{u}_h^{n+1}],I[{e}^{n+1}_{u}], v_{h}\right)  
+\nu\left( \nabla I[{e}^{n+1}_{u}], \nabla  v_{h}\right) \\
&-\left( e_{p}^{n+1}, \nabla \cdot v_h \right)
=\tau^{n+1}(u,p;v_h).
\end{aligned}
\end{equation}
Decompose the error equation for velocity
\begin{equation}\label{deco_velo_error}
u(t_{n+1})- u^{n+1}_{h}=( u^{n+1}-\tilde{ u}^{n+1}_{h})+(\tilde{ u}^{n+1}_{h}- u^{n+1}_{h})= {\eta}^{n+1} + \phi^{n+1}_{h}.
\end{equation} where $\tilde{ u}^{n+1}_{h}$ is the best approximation of $ u(t^{n+1})$ in ${V}_{h}$.

Set $ v_{h}=I[\phi^{n+1}_{h}]$. Using the identity (\ref{eqn:theidentity_part2}) with $a=\phi^{n+1}_h$, $b=\phi^{n}_h$, $c=\phi^{n-1}_h$, (\ref{deco_velo_error}), and applying $(\lambda_h,\nabla \cdot \phi_h)=0$ for all $\lambda_h \in V^h$, equation ($\ref{subtract_first_result}$) can be written
\begin{equation}\label{subtract_result}
\begin{aligned}
&\dfrac{1}{4 \Delta t}(\|\phi^{n+1}_{h}\|^2+\|2\phi^{n+1}_{h}-\phi^{n}_{h}\|^2+\|\phi^{n+1}_{h}-\phi^{n}_{h}\|^2)\\
&-\dfrac{1}{4 \Delta t}(\|\phi^{n}_{h}\|^2+\|2\phi^{n}_{h}-\phi^{n-1}_{h}\|^2+\|\phi^{n}_{h}-\phi^{n-1}_{h}\|^2)\\
&+\dfrac{3}{4 \Delta t}\|\phi^{n+1}_{h}-2\phi^{n}_{h}+\phi^{n-1}_{h}\|^2
+\nu \|\nabla I[\phi^{n+1}_{h}]\|^2\\
&=-\left( \frac{D[\eta^{n+1}]}{\Delta t},I[\phi^{n+1}_{h}]  \right) -b\left(I[\phi^{n+1}_{h}],I[u(t^{n+1})], I[\phi^{n+1}_{h}]\right) \\
&-b\left( I[u^{n+1}_{h}], I[\eta^{n+1}], I[\phi^{n+1}_{h}]\right) -b\left(  I[\eta^{n+1}],I[u(t^{n+1})], I[\phi^{n+1}_{h}]\right)\\
&+\left( {p}(t^{n+1})-{\lambda}^{n+1}_h, \nabla \cdot I[\phi^{n+1}_{h}]\right)-\nu \left( \nabla I[\eta^{n+1}], \nabla I[\phi^{n+1}_{h}] \right) \\
&+ \tau^{n+1}(u,p;I[\phi^{n+1}_{h}]).
\end{aligned}
\end{equation}

The next step in the proof is to bound all the terms on the right hand side of $(\ref{subtract_result})$ and absorb terms into the left hand side. For arbitrary $\varepsilon>0$, the first term on the right hand side of (\ref{subtract_result}) is bounded in the following way,
\begin{equation}\label{bound_first_term}
\begin{aligned}
&-\left( \frac{D[\eta^{n+1}]}{\Delta t},  I[\phi^{n+1}_{h}]\right)
\leq \frac{1}{4\varepsilon}\bigg\|\frac{D[\eta^{n+1}]}{\Delta t}\bigg\|^2_{-1}
+\varepsilon\|\nabla I[\phi^{n+1}_{h}]\|^2.
\end{aligned}
\end{equation}
The first nonlinear term can be bounded as 
\begin{equation}\label{bound_first_nonlinearterm}
\begin{aligned}
&-b\left(I[\phi^{n+1}_{h}],I[u(t^{n+1})], I[\phi^{n+1}_{h}]\right)\leq C\|I[\phi^{n+1}_{h}]\| 
\|I[u(t^{n+1})]\|_2 \|\nabla I[\phi^{n+1}_{h}]
\| \\
&\leq\frac{C^2}{4\varepsilon}\|I[\phi^{n+1}_{h}]\|^2 \|I[u(t^{n+1})]\|^2_2+\varepsilon\|\nabla I[\phi^{n+1}_{h}]\|^2.
\end{aligned}
\end{equation}
The second nonlinear term is estimated by rewriting it using (\ref{deco_velo_error}) as follows
\begin{equation}\label{bound_2nd_nonlinearterm}
\begin{aligned}
&-b\left( I[u^{n+1}_{h}], I[\eta^{n+1}], I[\phi^{n+1}_{h}]\right)
=-b\left( I[u(t^{n+1})],I[\eta^{n+1}],I[\phi^{n+1}_{h}]\right) \\
&+b\left( I[\eta^{n+1}],I[\eta^{n+1}],I[\phi^{n+1}_{h}]\right) 
+b\left( I[\phi^{n+1}_{h}],I[\eta^{n+1}],I[\phi^{n+1}_{h}] \right).
\end{aligned}
\end{equation}
then find bounds for all terms on the right hand side of (\ref{bound_2nd_nonlinearterm}). We bound the third nonlinear term in \eqref{subtract_result} the same way as the first nonlinear term in \eqref{bound_2nd_nonlinearterm}.
\begin{equation}\label{bound_2nd_nonlinearterm_1}
\begin{aligned}
&-b\left( I[u(t^{n+1})],I[\eta^{n+1}],I[\phi^{n+1}_{h}]\right) \\
&\leq C \|\nabla I[u(t^{n+1})]\| \|\nabla I[\eta^{n+1}]\| \|\nabla I[\phi^{n+1}_{h}]\| \\
&\leq \frac{C^2}{4\varepsilon}\| u\|^2_{\infty,1} \|\nabla I[\eta^{n+1}]\|^2+\varepsilon\|\nabla I[\phi^{n+1}_{h}]\|^2,
\end{aligned}
\end{equation}
and
\begin{equation}\label{bound_2nd_nonlinearterm_2}
\begin{aligned}
&b\left( I[\eta^{n+1}],I[\eta^{n+1}],I[\phi^{n+1}_{h}]\right) 
\leq \frac{C^2}{4\varepsilon}\|\nabla I[\eta^{n+1}]\|^4+\varepsilon \|\nabla I[\phi^{n+1}_{h}]\|^2.
\end{aligned}
\end{equation}
Next, we have 
\begin{equation}\label{bound_2nd_nonlinearterm_3}
\begin{aligned}
&b\left( I[\phi^{n+1}_{h}],I[\eta^{n+1}],I[\phi^{n+1}_{h}] \right)\\
&\leq C \|I[\phi^{n+1}_{h}]\|^{\frac{1}{2}} \|\nabla I[\phi^{n+1}_{h}]\|^{\frac{1}{2}} 
\|\nabla I[\eta^{n+1}]\| \|\nabla I[\phi^{n+1}_{h}]\| \\
&\leq C h^{\frac{-1}{2}} \|I[\phi^{n+1}_{h}]\| \|\nabla I[\eta^{n+1}]\| \|\nabla I[\phi^{n+1}_{h}]\| \\
&\leq C h^{\frac{1}{2}} \|I[\phi^{n+1}_{h}]\| \|I[u(t^{n+1})]\|_2 \|\nabla I[\phi^{n+1}_{h}]\| \\
&\leq \frac{C^2}{4\varepsilon} h \|I[\phi^{n+1}_{h}]\|^2 \|I[u(t^{n+1})]\|^2_2 + \varepsilon \|\nabla I[\phi^{n+1}_{h}]\|^2.
\end{aligned}
\end{equation}
The pressure can be bounded as follows
\begin{equation}\label{bound_pressure}
\begin{aligned}
&\left( {p}(t^{n+1})-{\lambda}^{n+1}_h, \nabla \cdot I[\phi^{n+1}_{h}]\right)
\leq \frac{C^2}{4\varepsilon} \|{p}(t^{n+1})-{\lambda}^{n+1}_h\|^2+\varepsilon \|\nabla I[\phi^{n+1}_{h}]\|^2.
\end{aligned}
\end{equation}
Then we can bound the term after the pressure,
\begin{equation}\label{bound_before_last_term}
\begin{aligned}
&-\nu \left( \nabla I[\eta^{n+1}], \nabla(I[\phi^{n+1}_{h}]
) \right) 
\leq \frac{C^2}{4\varepsilon} \|\nabla I[\eta^{n+1}]\|^2+\varepsilon \|\nabla I[\phi^{n+1}_{h}]\|^2.
\end{aligned}
\end{equation}
Next we will bound all components of the consistency error $\tau^{n+1}(u,p;I[\phi^{n+1}_{h}])$.
\begin{equation}\label{bound_last_term}
\begin{aligned}
&\left( \frac{D[u(t^{n+1})]}{\Delta t}- u_{t}{(t^{n+1})},I[\phi^{n+1}_{h}]\right) \\
&\leq C \|\frac{D[u(t^{n+1})]}{\Delta t}- u_{t}{(t^{n+1})}\| \|\nabla I[\phi^{n+1}_{h}]\| \\
&\leq \frac{C^2}{4\varepsilon} \|\frac{D[u(t^{n+1})]}{\Delta t}- u_{t}{(t^{n+1})}\|^2+\varepsilon \|\nabla I[\phi^{n+1}_{h}]\|^2.
\end{aligned}
\end{equation}
\begin{equation}\label{bound_last_term2}
\begin{aligned}
&\nu \left( \nabla (I[u(t^{n+1})] - u(t^{n+1})),  \nabla I[\phi^{n+1}_{h}]\right)\\ 
&\leq \frac{C^2}{4\varepsilon} \|\nabla (I[u(t^{n+1})] - u(t^{n+1}))\|^2+\varepsilon \|\nabla I[\phi^{n+1}_{h}]\|^2.
\end{aligned}
\end{equation}
The nonlinear term in $\tau^{n+1}(u,p;I[\phi^{n+1}_{h}])$ is then estimated as follows,
\begin{equation}
\begin{aligned}
&b\left(I[u(t^{n+1})],I[u(t^{n+1})],  I[\phi^{n+1}_{h}]\right)
- b(u(t^{n+1}),u(t^{n+1}),I[\phi^{n+1}_{h}])\\
&=b\left(I[u(t^{n+1})] -u(t^{n+1}) ,I[u(t^{n+1})],  I[\phi^{n+1}_{h}]\right) 
-b(u(t^{n+1}),I[u(t^{n+1})] -u(t^{n+1}) ,I[\phi^{n+1}_{h}])\\
&\leq C  \|\nabla (I[u(t^{n+1})] -u(t^{n+1}))\| \|\nabla I[\phi^{n+1}_{h}]\|\Big( \|\nabla I[u(t^{n+1})]\| + \|\nabla u(t^{n+1})\| \Big)\\
&\leq \frac{C^2}{4\varepsilon} \|\nabla (I[u(t^{n+1})] -u(t^{n+1}))\|^2 \Big( \|\nabla I[u(t^{n+1})]\|^2 + \|\nabla u(t^{n+1})\|^2 \Big)
+\varepsilon \|\nabla I[\phi^{n+1}_{h}]\|^2.
\end{aligned}
\end{equation}
Set $\varepsilon=\frac{\nu}{16}$. Using $(\ref{bound_first_term})$ to $(\ref{bound_before_last_term})$ in \eqref{subtract_result} yields
\begin{equation}\label{bound_after_set_epsilon}
\begin{aligned}
&\dfrac{1}{4 \Delta t}(\|\phi^{n+1}_{h}\|^2+\|2\phi^{n+1}_{h}-\phi^{n}_{h}\|^2+\|\phi^{n+1}_{h}-\phi^{n}_{h}\|^2)
+\frac{\nu}{4} \|\nabla I[\phi^{n+1}_{h}]\|^2\\
&-\dfrac{1}{4 \Delta t}(\|\phi^{n}_{h}\|^2+\|2\phi^{n}_{h}-\phi^{n-1}_{h}\|^2+\|\phi^{n}_{h}-\phi^{n-1}_{h}\|^2)
+\dfrac{3}{4 \Delta t}\|\phi^{n+1}_{h}-2\phi^{n}_{h}+\phi^{n-1}_{h}\|^2\\
&\leq C  \Big( \|\frac{D[\eta^{n+1}]}{\Delta t}\|^2_{-1} 
+ (1+h)\|I[\phi^{n+1}_{h}]\|^2 \|I[u(t^{n+1})]\|^2_2\\
&+\| u\|^2_{\infty,1} \|\nabla I[\eta^{n+1}]\|^2
+ \|\nabla I[\eta^{n+1}]\|^4 
+ \|{p}(t^{n+1})-{\lambda}^{n+1}_h\|^2 \\
&+  \|\nabla I[\eta^{n+1}]\|^2
+ \|\frac{D[u(t^{n+1})]}{\Delta t}- u_{t}{(t^{n+1})}\|^2 \\
& + \|\nabla (I[u(t^{n+1})] - u(t^{n+1}))\|^2  \\
&+ \|\nabla (I[u(t^{n+1})] -u(t^{n+1}))\|^2 
( \|\nabla I[u(t^{n+1})]\|^2 + \|\nabla u(t^{n+1})\|^2 )\Big).
\end{aligned}
\end{equation}
Let $\kappa=C \nu \| u\|^2_{\infty,2}(1+h)$. Assume $\Delta t <\frac{1}{\kappa}$, summing from $n=1$ to $n=N-1$ and applying the discrete Gronwall lemma we obtain
\begin{equation}\label{bound_after_grwonwall}
\begin{aligned}
&\|\phi^{N}_{h}\|^2+\|2\phi^{N}_{h}-\phi^{N-1}_{h}\|^2+\|\phi^{N}_{h}-\phi^{N-1}_{h}\|^2\\
&+\sum_{n=1}^{N-1} 3\|\phi^{n+1}_{h}-2\phi^{n}_{h}+\phi^{n-1}_{h}\|^2 
+\nu \Delta t \sum_{n=1}^{N-1} \|\nabla I[\phi^{n+1}_{h}]\|^2\\
&\leq e^{\Big( \frac{\Delta t \kappa(N-1)}{1-\Delta t \kappa}\Big) } \Big(\|\phi^{1}_{h}\|^2+\|2\phi^{1}_{h}-\phi^{0}_{h}\|^2+\|\phi^{1}_{h}-\phi^{0}_{h}\|^2 
+C\Delta t \sum_{n=1}^{N-1}\|\frac{D[\eta^{n+1}]}{\Delta t}\|^2_{-1}\\
&+C \Delta t \nu ( \| u\|^2_{\infty,1}+ 1)\sum_{n=1}^{N-1}\|\nabla I[\eta^{n+1}]\|^2
+C \Delta t \sum_{n=1}^{N-1} \|\nabla I[\eta^{n+1}]\|^4 \\
&+C \Delta t \sum_{n=1}^{N-1}\|{p}(t^{n+1})-{\lambda}^{n+1}_h\|^2
+C \Delta t \sum_{n=1}^{N-1}\|\frac{D[u(t^{n+1})]}{\Delta t}- u_{t}{(t^{n+1})}\|^2 \\ 
&+C  \Delta t \sum_{n=1}^{N-1} \|\nabla (I[u(t^{n+1})] -u(t^{n+1}))\|^2 \\
&+C  \Delta t \sum_{n=1}^{N-1} \|\nabla (I[u(t^{n+1})] -u(t^{n+1}))\|^2 
( \|\nabla I[u(t^{n+1})]\|^2 + \|\nabla u(t^{n+1})\|^2 )\Big).
\end{aligned}
\end{equation}
The first three terms can be bounded as
\begin{equation}\label{bound_after_grwonwall_first_term}
\begin{aligned}
&\|\phi^{1}_{h}\|^2+\|2\phi^{1}_{h}-\phi^{0}_{h}\|^2+\|\phi^{1}_{h}-\phi^{0}_{h}\|^2\\
&\leq C \Big( \|u(t_1)-u^1_h\|^2 +\|(u(t_0)-u^0_h)\|^2 
 \Big)
+ C h^{2k+2}\||u\||^2_{\infty,k+1}.
\end{aligned}
\end{equation}
We bound the fourth term in (\ref{bound_after_grwonwall}) as follows
\begin{equation}\label{bound_after_grwonwall_second_term}
\begin{aligned}
&\nu\Delta t \sum_{n=1}^{N-1}\|\frac{D[\eta^{n+1}]}{\Delta t}\|^2_{-1}
=\nu\Delta t \sum_{n=1}^{N-1} \|\frac{\frac{3}{2}(\eta^{n+1}-\eta^{n})-\frac{1}{2}(\eta^{n}-\eta^{n-1})}{\Delta t}\|^2_{-1}\\
&\leq C \sum_{n=0}^{N} \int^{t^{n+1}}_{t^{n-1}} \|\eta_t\|^2 ds
\leq C h^{2k+2} \| u_t\|^2_{2,k+1},
\end{aligned}
\end{equation}
and 
\begin{equation}\label{bound_after_grwonwall_third_term}
\begin{aligned}
&\Delta t ( \nu \| u\|^2_{\infty,1}+ \nu)\sum_{n=1}^{N-1}\|\nabla I[\eta^{n+1}]\|^2\\
&\leq
C\Delta t \nu(2  \| u\|^2_{\infty,1}+1)\max \left\lbrace \frac{9}{4},4,\frac{1}{4} \right\rbrace \sum_{n=1}^{N-1} 3\left( \|\nabla\eta^{n+1}\|^2+ \|\nabla\eta^{n}\|^2 + \|\nabla\eta^{n-1}\|^2 \right)\\
& \leq C \Delta t \sum_{n=0}^{N} h^{2k} \| u^{n+1}\|^2_{k+1}
=Ch^{2k}\||  u\||^2_{2,k+1}.
\end{aligned}
\end{equation}
Similarly to (\ref{bound_after_grwonwall_third_term}), we also have 
\begin{equation}\label{bound_after_grwonwall_4th_term}
\begin{aligned}
&\Delta t \sum_{n=1}^{N-1} \|\nabla I[\eta^{n+1}]\|^4 
\leq
C \Delta t \sum_{n=0}^{N} h^{4k} \| u^{t+1}\|^4_{k+1}
=Ch^{4k} \|| u\||^4_{4,k+1}.
\end{aligned}
\end{equation}
Observe that 
\begin{equation}\label{bound_after_grwonwall_6th_term}
\begin{aligned}
& \nu \Delta t \sum_{n=1}^{N}\|{p}(t^{n+1})-{\lambda}^{n+1}_h\|^2
\leq
C h^{2s+2} \||p\||^2_{2,s+1}.
\end{aligned}
\end{equation}
The terms from consistency error are bounded using Lemma \ref{consis_lemma}.
\begin{equation}\label{bound_after_grwonwall_5th_term}
\begin{aligned}
& \nu \Delta t \sum_{n=1}^{N-1}\|\frac{D[u(t^{n+1})]}{\Delta t}- u_{t}{(t^{n+1})}\|^2
=C \Delta t ^4 \sum_{n=0}^{N-1} \int_{t^{n-1}}^{t^{n+1}} \| u_{ttt}\|^2dt
=C \Delta t ^4 \| u_{ttt}\|^2_{2,0}.
\end{aligned}
\end{equation}
\begin{equation}
\begin{aligned}
&\nu \Delta t \sum_{n=1}^{N-1} \|\nabla (I[u(t^{n+1})] -u(t^{n+1}))\|^2 
\leq C \Delta t ^4 \sum_{n=1}^{N-1} \int_{t^{n-1}}^{t^{n+1}} \| \nabla u_{tt}\|^2dt
\leq C \Delta t ^4 \| \nabla u_{tt}\|^2_{2,0}.
\end{aligned}
\end{equation}
\begin{equation}
\begin{aligned}
&\nu \Delta t \sum_{n=1}^{N-1} \|\nabla (I[u(t^{n+1})] -u(t^{n+1}))\|^2 
( \|\nabla I[u(t^{n+1})]\|^2 + \|\nabla u(t^{n+1})\|^2 )\\
&\leq C \Delta t \sum_{n=1}^{N-1}( \|\nabla I[u(t^{n+1})]\|^2 + \|\nabla u(t^{n+1})\|^2 ) \Delta t^3 \int_{t^n}^{t^{n+1}} \| \nabla u_{tt}\|^2dt \\
& \leq C \Delta t ^4 \sum_{n=1}^{N-1} (\int_{t^{n-1}}^{t^{n+1}} \|\nabla I[u(t^{n+1})]\|^4
+ \|\nabla u(t^{n+1})\|^4 + \| \nabla u_{tt}\|^4dt) \\
&\leq C \Delta t^4 ( \||\nabla u \||^4_{4,0} + \| \nabla u_{tt}\|^4_{4,0}).
\end{aligned}
\end{equation}
Combining $(\ref{bound_after_grwonwall_first_term})$ - $(\ref{bound_after_grwonwall_6th_term})$ gives
\begin{equation}\label{final_result}
\begin{aligned}
&\|\phi^{N}_{h}\|^2+\|2\phi^{N}_{h}-\phi^{N-1}_{h}\|^2+\|\phi^{N}_{h}-\phi^{N-1}_{h}\|^2+\sum_{n=1}^{N-1} 3\|\phi^{n+1}_{h}-2\phi^{n}_{h}+\phi^{n-1}_{h}\|^2 \\
&+\nu \Delta t \sum_{n=1}^{N-1} \|\nabla I[\phi^{n+1}_{h}]\|^2\\
&\leq
C\Big(  \|u(t_1)-u^1_h\|^2 +\|(u(t_0)-u^0_h)\|^2 
+h^{2k+2}\||u\||^2_{\infty,k+1}  \\
&+h^{2k+2} \| u_t\|^2_{2,k+1}
+h^{2k}\||  u\||^2_{2,k+1}
+h^{4k} \|| u\||^4_{4,k+1}
+h^{2s+2} \||p\||^2_{2,s+1}
 \\
&+ \Delta t ^4 ( \| u_{ttt}\|^2_{2,0}
+ \|\nabla u_{tt}\|^2_{2,0}
+\||\nabla u \||^4_{4,0} + \| \nabla u_{tt}\|^4_{4,0})\Big).
\end{aligned}
\end{equation}
We add both sides of $(\ref{final_result})$ with 
\begin{equation}\label{extra_term}
\begin{aligned}
&\|{\eta}^{N}\|^2+\|2{\eta}^{N}-{\eta}^{N-1}\|^2+\|{\eta}^{N}-{\eta}^{N-1}\|^2+\sum_{n=1}^{N-1} 3\|{\eta}^{n+1}-2{\eta}^{n}+{\eta}^{n-1}\|^2 \\
&+\nu \Delta t \sum_{n=1}^{N-1} \|\nabla (\frac{3}{2}{\eta}^{n+1}-{\eta}^{n}+\frac{1}{2}{\eta}^{n-1})\|^2.
\end{aligned}
\end{equation}
and apply triangle inequality to get $(\ref{velocity_error})$. 
\end{proof}
\label{section6}

\section{Second Order Error Estimator\label{sec:num_ode}}
This section justifies the use of $EST_2$ as an error estimator for the second order approximation. A Taylor series calculation shows that the second order approximation $y_{(2)}^{n+1}$ in Algorithm \ref{alg:vsvo_themethod} has the local truncation error (LTE) (for constant stepsize)
\begin{equation}\notag
LTE = -\Delta t^3\left(\frac{1}{3}y''' + \frac{1}{2}f_y y''\right) + \mathcal{O}(\Delta t^4).
\end{equation}
 Consider the addition of a second time filter,
 \begin{equation}
\begin{array}{ccc}
\text{Step 1} & : & \frac{y_{n+1}^{(1)}-y^{n}}{\Delta t}=f(t_{n+1},y^{n+1}_{(1)}), \\ 
\text{Step 2} & : & y_{(2)}^{n+1}=y^{n+1}_{(1)}-\frac{1}{3}\left\{
y^{n+1}_{(1)}-2y^{n}+y^{n-1}\right\}\\
\text{Step 3} & : & y_{n+1}=y_{(2)}^{n+1}-\frac{2}{11}\left\{
y_{(2)}^{n+1}-3y^{n}+3y^{n-1} - y^{n-2}\right\}
\end{array}
\label{eq:methodConstantTimestepMoreFilter}
\end{equation}%
Another Taylor series calculation shows that the induced method has the LTE of
\begin{equation}\notag
LTE = -\Delta t^3\frac{1}{2}f_y y'' + \mathcal{O}(\Delta t^4),
\end{equation}
Thus, $y_{n+1}$ yields a more accurate (still second order) approximation, and $$EST_2=y_{(2)}^{n+1}-y_{n+1}=\frac{2}{11}\left\{
y_{n+1}^{(2)}-3y^{n}+3y^{n-1} - y^{n-2}\right\}$$ gives an estimate for the error of $y_{n+1}$. This is extended to variable stepsize using Newton interpolation, and written with stepsize ratios in Algorithm \ref{alg:vsvo_themethod}.

This is a nonstandard approach since one would normally use a higher order approximation to estimate the error. However, this is simple since it requires no additional function evaluations or Jacobians, and does not require solving a system of equations. Interestingly, \eqref{eq:methodConstantTimestepMoreFilter} remains energy stable, and could be useful as a standalone constant stepsize method.
\end{document}